\documentclass[12pt,a4paper]{article}%
\usepackage[utf8]{inputenc}
\usepackage{hyperref}
\usepackage{amsmath}
\usepackage{amsfonts}
\usepackage{amssymb}
\usepackage{xcolor}
\usepackage{graphicx}%

\usepackage[normalem]{ulem}

\providecommand{\U}[1]{\protect\rule{.1in}{.1in}}
\newtheorem{theorem}{Theorem}

\newtheorem{conjecture}[theorem]{Conjecture}
\newtheorem{corollary}[theorem]{Corollary}

\newtheorem{lemma}[theorem]{Lemma}

\newtheorem{problem}[theorem]{Problem}
\newtheorem{proposition}[theorem]{Proposition}
\newtheorem{remark}[theorem]{Remark}

\newenvironment{proof}[1][Proof]{\noindent\textbf{#1.} }{\ \hfill \rule{0.5em}{0.5em}\bigskip}
\graphicspath{{Slike/}}

\begin{document}

\title{Disproving two conjectures on the cospectrality of complete tripartite graphs}
\author{Irena M. Jovanovi\'{c}$^{1}$, Jelena Sedlar$^{2,4}$, Riste
\v{S}krekovski$^{3,4,5}$\\[0.3cm]
{\small $^{1}$ \textit{Independent Researcher, Belgrade, Serbia}}\\[0.1cm]
{\small $^{2}$ \textit{University of Split, Faculty of civil engineering,
architecture and geodesy, Croatia}}\\[0.1cm]
{\small $^{3}$ \textit{Faculty of Mathematics and Physics, University of
Ljubljana, Slovenia}}\\[0.1cm]
{\small $^{4}$ \textit{Faculty of Information Studies, University of Novo Mesto, Slovenia}}\\[0.1cm]
{\small $^{5}$ \textit{FAMNIT, University of Primorska, Koper, Slovenia}}}
\date{}
\maketitle

\begin{abstract}
The spectral distance between two non-isomorphic graphs of the same order is the $\ell^p$-distance between their ordered adjacency spectra, and the cospectrality of a graph $G$ on a family $\mathcal{G}_n$ is the minimum such distance from $G$ to another graph in $\mathcal{G}_n$. Abdollahi and Zakeri (Ars Math. Contemp. 22(1) (2022) P1.06) conjectured that, for every $n\ge 4$ and for $p\in\{1,2\}$, the cospectrality of the complete tripartite graph $K_{n,n,n}$ among all graphs on $3n$ vertices is attained exactly at $K_{n-1,n,n+1}$. We disprove both conjectures for every $n\ge 4$. As a counterexample we use the graph $\mathcal{H}_n=(n-1)K_1\nabla(K_1\cup K_{n,n})$, whose spectrum we determine. It yields $\mathrm{cs}^{(1)}(K_{n,n,n})\le 1+o(1)$ and $\mathrm{cs}^{(2)}(K_{n,n,n})\le \tfrac12+o(1)$ as $n\to\infty$, contrary to the conjectured values. We also study the cospectrality of $K_{n,n,n}$ for $p=2$, asymptotically, among complete multipartite graphs with possibly added isolated vertices. We conclude the paper by posing a conjecture that $\mathcal{H}_n$ is the unique graph nearest to $K_{n,n,n}$ for $p\in\{1,2\}$ and every $n\ge 3$, and a problem asking whether every connected graph $H$ with $\sigma^{(2)}(K_{n,n,n},H)<\frac23$ has an adjacency matrix of rank at most $5$.
\end{abstract}

\textit{Keywords:} adjacency matrix; graph spectrum; spectral distance; cospectrality; complete multipartite graph.

\textit{AMS Subject Classification numbers:} 05C50

\section{Introduction}

All graphs considered in this paper are finite, simple and undirected. For such a graph $G$ of order $n$ with the vertex set $V(G)=\{v_1,v_2,\ldots,v_n\}$, the matrix $A(G)=[a_{ij}]_{n\times n}$, whose $(i,j)$-th entry $a_{ij}$ is equal to the number of edges connecting vertices $v_i$ and $v_j$, is the \emph{adjacency matrix} of $G$. The \emph{cha\-ra\-cte\-ri\-stic polynomial} $P_G(x)=\det(xI_n-A(G))$ of $G$, where $I_n$ stands for the identity matrix, is the characteristic polynomial of its adjacency matrix $A(G)$. The adjacency eigenvalues $\lambda_1(G)\geq\lambda_2(G)\geq\cdots\geq\lambda_n(G)$ of $G$ are the eigenvalues of its adjacency matrix, and they constitute the \emph{(adjacency) spectrum} of $G$. We will always assume that the eigenvalues of graphs considered throughout the paper are ordered monotonically decreasing. For an integer $l\geq 1$, the \emph{$l$-th spectral moment} $s_l(G)$ of $G$ is defined as $s_l(G)=\sum_{i=1}^{n} \lambda_i^l(G)$. It holds \cite{CvRowSim} that $s_1(G)=0$ and $s_2(G)=2m$, where $m$ is the number of edges of $G$. Let us denote $\Gamma_{G}=(\lambda_1(G), \lambda_2(G),\ldots,\lambda_n(G))$. 

The notation common for the spectral graph theory is used in the paper. In that way, $K_n$ is the complete graph on $n$ vertices, while for the complete multipartite graph with $r$ parts and $n_i$, $1\leq i\leq r$, vertices in each of them, the mark $K_{n_1,n_2,\ldots,n_r}$ will be used. The characteristic polynomials and spectra of these two graphs are well known, and can be found, for example, in \cite[Section 2.6]{CvRowSim}. That way, the spectrum of $K_n$ consists of the eigenvalue $n-1$ and $n-1$ eigenvalues equal to $-1$, while the spectrum of $K_{n,n,n}$ contains the following eigenvalues $2n$, $0$ with the multiplicity $3n-3$ and $-n$ with the multiplicity $2$. The \emph{complement} of a graph $G$ is denoted by $\overline{G}$. For the remaining notation and terminology, the reader is referred to \cite{CvDoobSachs} and \cite{CvRowSim}.

Let $\mathbf{x}=(x_1,x_2,\ldots,x_n)^T$ be a vector in the $n$-dimensional real vector space $\mathbb{R}^n$. For a real number $p\geq 1$, the \emph{$\ell^p$-norm} of $\mathbf{x}$ is
\begin{equation*}
\|\mathbf{x}\|_p=\left(\sum\limits_{i=1}^{n} |x_i|^p\right)^{\frac{1}{p}}.
\end{equation*}

Let $G_1$ and $G_2$ be two non-isomorphic graphs on $n$ vertices. The \emph{spectral distance $\sigma^{(p)}(G_1, G_2)$} between $G_1$ and $G_2$ is defined as
\begin{equation*}
\sigma^{(p)}(G_1, G_2)=\left(\|\Gamma_{G_1}-\Gamma_{G_2}\|_p\right)^p=\sum_{i=1}^{n}\left\vert\lambda_{i}(G_1)-\lambda_{i}(G_2)\right\vert^p .
\end{equation*}

\noindent If $p=1$, the distance $\sigma^{(1)}(G_1, G_2)=\sum_{i=1}^{n}\left\vert\lambda_{i}(G_1)-\lambda_{i}(G_2)\right\vert$ is known as the \emph{Manhattan spectral distance}, for $p=2$, $\sigma^{(2)}(G_1, G_2)=\sum_{i=1}^{n}(\lambda_i(G_1)-\lambda_i(G_2))^2$ is the \emph{Euclidean spectral distance}. In the case of the Manhattan spectral distance, the superscript in the notation will be omitted.

Let $\mathcal{G}$ be a set of non-isomorphic graphs on $n$ vertices. The \emph{cospectrality} ${\rm cs}^{(p)}_{\mathcal{G}}(G)$ of a graph $G\in \mathcal{G}$, introduced in \cite[Problem AWGS.4]{aveiro} for $p\in\{1,2\}$, is 
\begin{equation*}
{\rm cs}^{(p)}_{\mathcal{G}}(G)=\min\{\sigma^{(p)}(G, H): H\in\mathcal{G}, H\ncong G\}.
\end{equation*}
The superscript is omitted when $p=1$.

In the literature so far, spectral distances between graphs and the related parameters have been studied mostly for $p=1$ and $p=2$. Jovanovi\'c and Stani\'c \cite{uma22,uma15,JovStaFIL,JovStaLAA} considered them with respect to the $\ell^1$-norm on particularly constructed classes of graphs, while Abdollahi and Zakeri \cite{az-cospectrality} determined the cospectralities of $nK_1$, $K_2+(n-2)K_1$, $K_n$, $K_{n,n}$ and $K_{n,n+1}$; the cospectrality with respect to the $\ell^2$-norm was studied in \cite{janbaz,ao-cospectrality,oboudi} for several complete bipartite graphs and for $nK_1$, $K_2+(n-2)K_1$ and $K_n$. In \cite{az-arscontemp}, Abdollahi and Zakeri turned to the cocktail party graph $CP_n$ and the complete tripartite graph $K_{n,n,n}$, and, based on computational results, posed conjectures determining their cospectralities for $p\in\{1,2\}$. Here we consider the conjectures concerning $K_{n,n,n}$.

\begin{conjecture}\cite[Conjecture 3.9]{az-arscontemp}\label{conjecture:triparts:1}
Let $\mathcal{G}$ be the set of non-isomorphic graphs on $3n$ vertices, $n\geq 4$. Then, ${\rm cs}_{\mathcal{G}}(K_{n,n,n})=\sigma(K_{n,n,n},H)$, for some graph $H$ if and only if $H\cong K_{n-1,n,n+1}$.
\end{conjecture}

\begin{conjecture}\cite[Conjecture 3.10]{az-arscontemp}\label{conjecture:triparts:2}
Let $\mathcal{G}$ be the set of non-isomorphic graphs on $3n$ vertices, $n\geq 4$. Then, ${\rm cs}_{\mathcal{G}}^{(2)}(K_{n,n,n})=\sigma^{(2)}(K_{n,n,n},H)$, for some graph $H$ if and only if $H\cong K_{n-1,n,n+1}$. 
\end{conjecture}

Notice that both conjectures are stated for $n\geq4$. Namely, in \cite{az-arscontemp} it is established that for $n=3$ the graph nearest to $K_{3,3,3},$ with respect to both the Manhattan and the Euclidean spectral distance and uniquely so, is not $K_{2,3,4}$ but a certain graph of order nine which is presented there in Figure 1. The case $n=3$ was therefore considered to be exceptional.

In this paper, we introduce a graph $\mathcal{H}_{n}$ of order $3n$, and prove that $\mathcal{H}_{n}$ is closer to $K_{n,n,n}$ than $K_{n-1,n,n+1}$ is, in both distances and for every $n\geq3$. The graph $\mathcal{H}_{3}$ is the unique graph nearest to $K_{3,3,3}$, so it is the graph meant in \cite{az-arscontemp} (the graph drawn in Figure 1 in \cite{az-arscontemp} differs from $\mathcal{H}_3$ by two non-adjacent edges). Hence both conjectures above are false already for $n=4.$ Moreover, we determine the asymptotic behaviour of the Manhattan and the Euclidean spectral distance between $K_{n,n,n}$ and $K_{n-1,n,n+1}$, and between $K_{n,n,n}$ and $\mathcal{H}_n$, which shows that the conjectured values of the cospectrality are wrong by a constant factor. In addition, the cospectrality of $K_{n,n,n}$ with respect to the Euclidean distance is considered asymptotically on the set of all complete multipartite graphs, possibly with isolated vertices added.

\section{Graph $\mathcal{H}_n$ and its properties}

In this section, we introduce the graph which served to refute Conjectures \ref{conjecture:triparts:1} and \ref{conjecture:triparts:2}, and expose some of its basic properties.

First of all, let us recall that the \emph{join} $G_1\nabla G_2$ of (disjoint) graphs $G_1$ and $G_2$ is the graph obtained from their disjoint union $G_1\,\cup\,G_2$ by joining each vertex of $G_1$ to each vertex of $G_2$. The characteristic polynomial of the join can be found in \cite[Theorem 2.1.5]{CvRowSim}.

\begin{theorem}\label{join}
Let $G_1$ and $G_2$ be graphs with $n_1$ and $n_2$ vertices, respectively. The characteristic polynomial of the join $G_1\nabla G_2$ is given by the following relation
\begin{align*}
P_{G_1\nabla G_2}(x) =\, & (-1)^{n_2}P_{G_1}(x)P_{\overline{G}_2}(-x-1)+(-1)^{n_1}P_{G_2}(x)P_{\overline{G}_1}(-x-1)\\
                   & -(-1)^{n_1+n_2}P_{\overline{G}_1}(-x-1)P_{\overline{G}_2}(-x-1).
\end{align*}
\end{theorem}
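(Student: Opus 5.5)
We will prove the formula by writing the adjacency matrix of the join in block form and using the all-ones matrix to pass between a graph and its complement. Let $A_i=A(G_i)$, let $\overline{A}_i=J_{n_i}-I_{n_i}-A_i$ be the adjacency matrix of $\overline{G}_i$, and let $\mathbf{j}_i$ denote the all-ones vector of length $n_i$. Then
\begin{equation*}
A(G_1\nabla G_2)=\begin{pmatrix} A_1 & J_{n_1\times n_2}\\ J_{n_2\times n_1} & A_2\end{pmatrix}.
\end{equation*}
The natural tool is the walk generating function $H_{G}(t)=\mathbf{j}^T(I-tA(G))^{-1}\mathbf{j}$. We will use two standard facts, each a consequence of the matrix determinant lemma $\det(M+uv^T)=\det(M)(1+v^TM^{-1}u)$.

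First, the complement identity. Write $xI-A(\overline{G})=(x+1)I+A(G)-J$ and apply the lemma with $u=v=\mathbf{j}$. This gives
\begin{equation*}
P_{\overline{G}}(x)=(-1)^n P_G(-x-1)\Bigl(1-\mathbf{j}^T\bigl((x+1)I+A(G)\bigr)^{-1}\mathbf{j}\Bigr).
\end{equation*}
Equivalently, replacing $x$ by $-x-1$, the quantity $\mathbf{j}^T(xI-A(G))^{-1}\mathbf{j}$ can be expressed as
\begin{equation*}
1-(-1)^n\frac{P_{\overline{G}}(-x-1)}{P_G(x)}.
\end{equation*}

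Second, the join itself. Since the off-diagonal blocks have rank one, $J_{n_1\times n_2}=\mathbf{j}_1\mathbf{j}_2^T$, the Schur complement gives
\begin{equation*}
P_{G_1\nabla G_2}(x)=P_{G_1}(x)\det\Bigl(xI-A_2-\mathbf{j}_2\,\mathbf{j}_1^T(xI-A_1)^{-1}\mathbf{j}_1\,\mathbf{j}_2^T\Bigr).
\end{equation*}
Set $h_i(x)=\mathbf{j}_i^T(xI-A_i)^{-1}\mathbf{j}_i$. Applying the determinant lemma once more yields
\begin{equation*}
P_{G_1\nabla G_2}(x)=P_{G_1}(x)P_{G_2}(x)\bigl(1-h_1(x)h_2(x)\bigr).
\end{equation*}

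Now substitute $h_i=1-(-1)^{n_i}P_{\overline{G}_i}(-x-1)/P_{G_i}(x)$ and expand $1-h_1h_2=1-(1-a)(1-b)=a+b-ab$, where $a,b$ are the two subtracted fractions. Multiplying through by $P_{G_1}P_{G_2}$ gives the three claimed terms. The term $a$ gives $(-1)^{n_1}P_{G_2}(x)P_{\overline{G}_1}(-x-1)$, the term $b$ gives $(-1)^{n_2}P_{G_1}(x)P_{\overline{G}_2}(-x-1)$, and the term $ab$ gives $(-1)^{n_1+n_2}P_{\overline{G}_1}(-x-1)P_{\overline{G}_2}(-x-1)$ with a minus sign. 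This matches the statement exactly.

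The only delicate point is that these manipulations use inverses. They are valid for $x$ outside the finitely many eigenvalues of $A_1$, $A_2$, $-I-A_1$ and $-I-A_2$. Both sides are polynomials in $x$, so the identity extends to all $x$. The main place where errors could creep in is the sign bookkeeping in the complement identity, namely the factor $(-1)^n$ coming from $\det((x+1)I+A)=(-1)^nP_G(-x-1)$. We will check this on $G=K_1$, where $\overline{G}=K_1$, before finishing.

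Alternatively, since this is the cited \cite[Theorem 2.1.5]{CvRowSim}, one may simply invoke it.
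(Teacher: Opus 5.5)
The paper gives no proof of this statement; it only quotes \cite[Theorem 2.1.5]{CvRowSim}. Your Schur-complement and rank-one determinant-lemma argument is therefore a self-contained proof the paper does not supply. Several pieces are correct: the reduction $P_{G_1\nabla G_2}(x)=P_{G_1}(x)P_{G_2}(x)\bigl(1-h_1(x)h_2(x)\bigr)$, your complement identity (the lemma is really applied with $u=-\mathbf{j}$, $v=\mathbf{j}$), and the extension from generic $x$ to all $x$.

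There is, however, a sign error in the \emph{equivalently} step. Substituting $x\mapsto -x-1$ in the complement identity turns the inner term into $\mathbf{j}^T(A-xI)^{-1}\mathbf{j}=-h(x)$. This gives
\begin{equation*}
P_{\overline{G}}(-x-1)=(-1)^nP_G(x)\bigl(1+h(x)\bigr),\qquad\text{that is,}\qquad h(x)=(-1)^n\frac{P_{\overline{G}}(-x-1)}{P_G(x)}-1,
\end{equation*}
which is the negative of the expression you wrote. The $K_1$ check you announced but did not carry out would have caught this: there $h(x)=1/x$, whereas your formula gives $-1/x$.

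The slip does not damage the final result. Only the product $h_1h_2$ enters, and $(a-1)(b-1)=(1-a)(1-b)$, so $1-h_1h_2=a+b-ab$ also holds with the correct signs. Your three terms then come out exactly as stated. Still, the intermediate identity as written is false, and you should correct it so that the argument actually proves what it claims.
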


For $n\geq2$, let us denote $\mathcal{H}_{n}=(n-1)K_{1}\nabla\left( K_{1}\cup K_{n,n}\right)$. The graph $\mathcal{H}_{n}$ is of order $3n$, it has $3n^{2}-n-1$ edges and it can be obtained by adding a new vertex $v$ to the complete tripartite graph $K_{n-1,n,n}$ with parts $X$, $Y$ and $Z$, where $|X|=n-1$ and $|Y|=|Z|=n$, and joining $v$ to every vertex of $X$ and to no other vertex. The spectrum of $\mathcal{H}_{n}$ is given by the following theorem.

\begin{theorem}\label{spec_hn}
For every $n\geq2,$ the spectrum of $\mathcal{H}_{n}$ consists of $-n$, $0$ with the multiplicity $3n-4$, and three eigenvalues $\lambda_1(\mathcal{H}_{n}), \lambda_2(\mathcal{H}_{n})$ and $\lambda_{3n-1}(\mathcal{H}_{n})$ which are the roots of the polynomial
\begin{equation}
q(x)=x^{3}-nx^{2}-(2n^{2}-n-1)x+(n^{2}-n).\label{poly_q}
\end{equation}
Moreover, $\lambda_{1}(\mathcal{H}_{n})\in\left(\frac{1}{2},2n\right),$ $\lambda_{2}(\mathcal{H}_{n})\in\left(0,\frac{1}{2}\right)$
and $\lambda_{3n-1}(\mathcal{H}_{n})\in(-n,0).$
\end{theorem}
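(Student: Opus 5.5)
We compute the characteristic polynomial of $\mathcal{H}_n=(n-1)K_1\nabla(K_1\cup K_{n,n})$ using Theorem~\ref{join}, with $G_1=(n-1)K_1$ and $G_2=K_1\cup K_{n,n}$, so $n_1=n-1$ and $n_2=2n+1$. The four ingredient polynomials are standard:
\begin{align*}
P_{G_1}(x)&=x^{n-1}, & P_{\overline{G}_1}(x)&=P_{K_{n-1}}(x)=(x-n+2)(x+1)^{n-2},\\
P_{G_2}(x)&=x^{2n-1}(x^2-n^2), & P_{\overline{G}_2}(x)&=P_{K_1\nabla 2K_n}(x).
\end{align*}
Here $P_{G_2}$ comes from the spectrum $\pm n$, $0^{2n-1}$ of $K_1\cup K_{n,n}$. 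The complement $\overline{G}_2=K_1\nabla(K_n\cup K_n)$ is handled by a second application of Theorem~\ref{join}, or more simply by an equitable partition into three cells (the apex and the two cliques). This gives $P_{\overline{G}_2}(x)=(x+1)^{2n-2}(x-n+1)\bigl(x^2-(n-1)x-2n\bigr)$.

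Substituting $-x-1$ into these, the factors $(x+1)^{n-2}$ and $(x+1)^{2n-2}$ become $(-x)^{n-2}$ and $(-x)^{2n-2}$, so every term carries a large power of $x$. Collecting signs, we expect
\[
P_{\mathcal{H}_n}(x)=x^{3n-4}(x+n)\,q(x).
\]
The verification amounts to factoring $x^{3n-4}$ out of all three terms and checking that the remaining quartic equals $(x+n)q(x)$. Expanding $(x+n)q(x)$ and matching coefficients is routine but is the main place errors can creep in, so I would cross-check it. A good check is a direct equitable-partition argument. Take the cells $X$, $\{v\}$, $Y$, $Z$; the quotient matrix is
\[
B=\begin{pmatrix}0&1&n&n\\ n-1&0&0&0\\ n-1&0&0&n\\ n-1&0&n&0\end{pmatrix}.
\]
Its characteristic polynomial should be $(x+n)q(x)$, with the eigenvalue $-n$ coming from the antisymmetric $Y$/$Z$ vector. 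The remaining $3n-4$ eigenvalues are $0$, coming from vectors summing to zero within each cell, since each cell is an independent set with constant adjacency to the other cells. This approach is cleaner and I would present it as the main argument, with Theorem~\ref{join} as an alternative. A sanity check is that the trace of $B$ is $0$, consistent with the three roots of $q$ summing to $n$, since $n+(-n)=0$.

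Finally, we locate the roots of $q$ by sign changes.
\begin{itemize}
\item $q(0)=n^2-n>0$ for $n\ge2$.
\item $q(\tfrac12)=\tfrac18-\tfrac n4-\tfrac{2n^2-n-1}{2}+n^2-n=\tfrac58-\tfrac{3n}{4}<0$ for $n\ge1$.
\item $q(-n)=-n^3-n^3+2n^3-n^2-n+n^2-n=-2n<0$.
\item $q(2n)=8n^3-4n^3-4n^3+2n^2+2n+n^2-n=3n^2+n>0$.
\end{itemize}
Since $q$ is a cubic with positive leading coefficient, these four signs place exactly one root in each of $(-n,0)$, $(0,\tfrac12)$ and $(\tfrac12,2n)$. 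Ordering them gives $\lambda_1\in(\tfrac12,2n)$, $\lambda_2\in(0,\tfrac12)$ and $\lambda_{3n-1}\in(-n,0)$, with $-n$ the smallest eigenvalue $\lambda_{3n}$ and the zeros in between.

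The main obstacle is only the bookkeeping in the determinant computation, which the quotient-matrix route largely avoids.
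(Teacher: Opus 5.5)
Your proposal is correct. The root-location step is identical to the paper's: the same four evaluation points $-n,0,\frac12,2n$, the same values, and the same ordering of the spectrum.

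The difference lies in how you obtain $P_{\mathcal{H}_n}(x)=x^{3n-4}(x+n)q(x)$. The paper uses only Theorem~\ref{join}. That route needs $P_{\overline{H}_2}$ from a second application of the join formula, and then a three-term expression that has to be simplified. Your main argument is the equitable partition $X,\{v\},Y,Z$, which is more transparent. It exhibits the $3n-4$ null vectors explicitly, gets $-n$ from the antisymmetric $Y/Z$ vector, and never involves complements. The paper's route is a black-box application of a standard formula; yours explains where each eigenvalue comes from.

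Two points should be tightened.

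First, you leave $\det(xI-B)$ as ``should be'', so the polynomial $q$ is asserted rather than derived. The quickest fix is to restrict $B$ to vectors of the form $(a,b,c,c)$. This gives the matrix $\begin{pmatrix}0&1&2n\\ n-1&0&0\\ n-1&0&n\end{pmatrix}$, whose characteristic polynomial is $x^2(x-n)-(n-1)(x-n)-2n(n-1)x=q(x)$. You then never have to expand a quartic.

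Second, make explicit that ``constant adjacency to the other cells'' means that every vertex outside a cell is adjacent to all or none of it. Mere equitability of the partition would only make the zero-sum complement invariant; it would not force $A$ to vanish there. With the all-or-none property, together with each cell being independent, any vector supported on a single cell with zero sum is annihilated. This gives $(n-2)+(n-1)+(n-1)=3n-4$ orthogonal null vectors. Your sign checks then show that neither $0$ nor $-n$ is a root of $q$, so the multiplicities are exactly as stated.
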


\begin{proof}
Let $H_1=(n-1)K_1$ and $H_2=K_1\cup K_{n,n}$. Using \cite[Theorem 2.1.1]{CvRowSim}, the characteristic polynomials of $H_1$ and $H_2$ are easily found
\begin{equation}\label{char_h1}
P_{H_1}(x)=x^{n-1},
\end{equation}
\begin{equation}\label{char_h2}
P_{H_2}(x)=(x-n)(x+n)\,x^{2n-1}.
\end{equation}
Since $\overline{H}_1=K_{n-1}$ and $\overline{H}_2=K_1\nabla 2K_n$, we have
\begin{equation}\label{char_comp_h1}
P_{\overline{H}_1}(x)=(x-n+2)(x+1)^{n-2},
\end{equation}
while using the fact $\overline{2K_n}=K_{n,n}$ and Theorem \ref{join},
\begin{equation}\label{char_comp_h2}
P_{\overline{H}_2}(x)=(x+1-n)(x+1)^{2n-2}(x^2+(1-n)x-2n).
\end{equation}
Given that $\mathcal{H}_{n}=H_1\nabla H_2$, by calling Theorem \ref{join} and using equations (\ref{char_h1}) -- (\ref{char_comp_h2}), we obtain
\begin{align*}
P_{\mathcal{H}_{n}}(x) = & \, (x+n)\,x^{3n-3}((x+1)^2+(n-1)(x+1)-2n)\\
                         & \, +(x+n)(x-n)(x+n-1)\,x^{3n-3}\\
                         & \, -(x+n)(x+n-1)\,x^{3n-4}\,((x+1)^2+(n-1)(x+1)-2n),
\end{align*}
that is
\begin{align}
\nonumber P_{\mathcal{H}_{n}}(x) = & \, x^{3n-4}\,(x+n)\,(x^3-nx^2+(1+n-2n^2)\,x+n^2-n)\\
\label{char_hn}                       = & \, x^{3n-4}\,(x+n)\,q(x).
\end{align}
Therefore, the first part of the statement follows from (\ref{char_hn}).

It remains to establish the position of the roots of the polynomial $q(x)$, i.e. the position of the eigenvalues in the spectrum of $\mathcal{H}_{n}$. We have $q(0)=n^{2}-n>0$,  $q\left(\frac{1}{2}\right)=-\frac{3}{4}\left(n-\frac{5}{6}\right)<0$, $q(2n)=3n\left( n+\frac{1}{3}\right) >0$ and $q(-n)=-2n<0$, so the
three roots are separated as claimed. This completes the proof.
\end{proof}

Let us notice that the bounds of intervals to which the eigenvalues $\lambda_{1}(\mathcal{H}_{n})$ and $\lambda_{2}(\mathcal{H}_{n})$ of $\mathcal{H}_{n}$ belong can be improved. Namely, from the proof of Theorem \ref{spec_hn}, we have that $q(x)$ is a monic polynomial of degree three, which is positive on the intervals $(\lambda_{3n-1}(\mathcal{H}_{n}), \lambda_{2}(\mathcal{H}_{n}))$ and $(\lambda_{1}(\mathcal{H}_{n}),+\infty)$, and negative on the intervals $(-\infty,\lambda_{3n-1}(\mathcal{H}_{n}))$ and $(\lambda_{2}(\mathcal{H}_{n}),\lambda_{1}(\mathcal{H}_{n}))$. For $n\geq 3$, it holds
\begin{equation}\label{q_fist}
q\left(2n-\frac{1}{2}-\frac{1}{3n}\right)=-\frac{1}{216n^3}\cdot\left(54n^4-153n^3+6n^2+36n+8\right)<0,
\end{equation}
while for $n\geq 2$, 
\begin{equation}\label{q_second}
q\left(\frac{1}{2}-\frac{1}{2n}\right)=\frac{1}{8n^3}\cdot\left(2n^4+5n^3-9n^2+3n-1\right)>0.
\end{equation}
From the proof of Theorem \ref{spec_hn}, we have $q\left(\frac{1}{2}\right)<0$ and $q(2n)>0$, which, together with (\ref{q_fist}) and (\ref{q_second}), proves the following statement. 

\begin{lemma}\label{spec_bounds_hn}
For every $n\geq 3$, the three roots of the polynomial (\ref{poly_q}) satisfy the following $\lambda_{1}(\mathcal{H}_{n})\in\left(2n-\frac{1}{2}-\frac{1}{3n},2n\right)$, $\lambda_{2}(\mathcal{H}_{n})\in\left(\frac{1}{2}-\frac{1}{2n},\frac{1}{2}\right)$ and $\lambda_{3n-1}(\mathcal{H}_{n})\in(-n,0).$
\end{lemma}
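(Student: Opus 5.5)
The plan is to refine the sign analysis from the proof of Theorem \ref{spec_hn} with the intermediate value theorem. We already know that $q$ is a monic cubic with exactly three real roots, separated as $\lambda_{3n-1}(\mathcal{H}_n)\in(-n,0)$, $\lambda_2(\mathcal{H}_n)\in(0,\frac12)$ and $\lambda_1(\mathcal{H}_n)\in(\frac12,2n)$. The third inclusion of the lemma is therefore already contained in Theorem \ref{spec_hn}, and only the two lower bounds need work.

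For $\lambda_2$, I will use (\ref{q_second}). It holds for $n\ge 2$, and so for $n\ge3$; for $n=2$ it equals $(32+40-36+6-1)/64>0$. Together with $q(\frac12)<0$, the intermediate value theorem gives a root in $(\frac12-\frac1{2n},\frac12)$. This interval lies inside $(0,\frac12)$, which contains exactly one root, namely $\lambda_2(\mathcal{H}_n)$. Hence that root is $\lambda_2$.

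For $\lambda_1$, I will use (\ref{q_fist}), which gives $q<0$ at $t_n=2n-\frac12-\frac1{3n}$, together with $q(2n)>0$. This yields a root in $(t_n,2n)$. For $n\ge3$ we have $t_n>\frac12$, so this interval lies in $(\frac12,2n)$ and the root must be $\lambda_1$.

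The only real checks are the two polynomial identities and their signs.

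\emph{Identities.} To verify (\ref{q_fist}) and (\ref{q_second}), I will substitute into $q$, clear the denominators $216n^3$ and $8n^3$, and expand.

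\emph{Sign of (\ref{q_second}).} For $n\ge2$, write $2n^4+5n^3-9n^2+3n-1 = n^2(2n^2+5n-9)+3n-1$. Here $2n^2+5n-9\ge 9>0$ and $3n-1>0$, so the expression is positive.

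\emph{Sign of (\ref{q_fist}).} For $n\ge3$, write $54n^4-153n^3=n^3(54n-153)$. This is positive since $54\cdot 3=162>153$, and the remaining terms $6n^2+36n+8$ are positive, so the whole expression is positive. It also explains the restriction $n\ge3$: at $n=2$ one gets $864-1224+24+72+8<0$, so the bound fails there.

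The main obstacle I expect is the expansion behind (\ref{q_fist}), a cubic evaluated at a point with three terms. I would organize it by writing $x=2n-\varepsilon$ with $\varepsilon=\frac12+\frac1{3n}$. Since $q(2n)=3n^2+n$, Taylor expansion gives
$$q(2n-\varepsilon)=q(2n)-q'(2n)\varepsilon+\tfrac12q''(2n)\varepsilon^2-\varepsilon^3.$$
Here $q'(x)=3x^2-2nx-(2n^2-n-1)$, so $q'(2n)=6n^2+n+1$, and $q''(x)=6x-2n$, so $q''(2n)=10n$. Substituting these values and $\varepsilon$ completes the verification.
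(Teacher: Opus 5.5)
Your proposal is correct and follows essentially the same route as the paper: it combines the sign evaluations (\ref{q_fist}) and (\ref{q_second}) with $q(\frac12)<0$, $q(2n)>0$ and the root separation from Theorem~\ref{spec_hn}, where the paper phrases the root identification through the sign pattern of $q$ instead of the intermediate value theorem, which amounts to the same thing. Your Taylor-expansion organization of (\ref{q_fist}) (with $q(2n)=3n^2+n$, $q'(2n)=6n^2+n+1$, $q''(2n)=10n$) and your positivity arguments for both numerators are sound, and they supply the verification the paper leaves to the reader.
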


In the following, closed form expressions for the Manhattan and the Euclidean spectral distances of $\mathcal{H}_{n}$ and $K_{n,n,n}$, which will be used in the next section, are exposed.

\begin{theorem}\label{hn_distances}
For $n\geq 2$, 
$$\sigma(K_{n,n,n},\mathcal{H}_{n})=2\Lambda\text{ \ and \ }\sigma^{(2)}(K_{n,n,n},\mathcal{H}_{n})=2\left( \Lambda^{2}-\Lambda\lambda_{2}(\mathcal{H}_{n})+\lambda_{2}^{2}(\mathcal{H}_{n})\right),$$
where $\Lambda=2n-\lambda_1(\mathcal{H}_{n})$, while $\lambda_1(\mathcal{H}_{n})$ and $\lambda_2(\mathcal{H}_{n})$ are as in Theorem \ref{spec_hn}.
\end{theorem}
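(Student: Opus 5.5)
We will align the two ordered spectra term by term and then use Vieta's formulas for $q(x)$ to simplify. The spectrum of $K_{n,n,n}$, in decreasing order, is $2n$, then $0$ with multiplicity $3n-3$, then $-n,-n$. By Theorem \ref{spec_hn}, the spectrum of $\mathcal{H}_n$ in decreasing order is $\lambda_1>\lambda_2>0=\cdots=0>\lambda_{3n-1}>-n$. Here the zeros occupy positions $3,\dots,3n-2$, and the last entry is $-n$. Abbreviating $\lambda_i=\lambda_i(\mathcal{H}_n)$, the nonzero differences occur only at three positions:
\begin{itemize}
\item position $1$, where the difference is $2n-\lambda_1=\Lambda$;
\item position $2$, where it is $0-\lambda_2=-\lambda_2$;
\item position $3n-1$, where it is $-n-\lambda_{3n-1}$.
\end{itemize}
At position $3n$ both spectra have $-n$. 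For $n\ge2$ all these positions are distinct, since $3n-1>2$.

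The key identity comes from Vieta applied to $q$: $\lambda_1+\lambda_2+\lambda_{3n-1}=n$. Hence $-n-\lambda_{3n-1}=\lambda_1+\lambda_2-2n=\lambda_2-\Lambda$. Equivalently, this follows from $s_1=0$ for both graphs. Next we determine the signs. By Theorem \ref{spec_hn}, $\lambda_1<2n$, so $\Lambda>0$. Also $\lambda_{3n-1}>-n$, so $-n-\lambda_{3n-1}<0$, which gives $\Lambda>\lambda_2>0$.

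For $p=1$ we therefore get
\[
\sigma=\Lambda+\lambda_2+(\Lambda-\lambda_2)=2\Lambda .
\]
For $p=2$ we get
\[
\sigma^{(2)}=\Lambda^2+\lambda_2^2+(\Lambda-\lambda_2)^2=2(\Lambda^2-\Lambda\lambda_2+\lambda_2^2),
\]
which is exactly the claimed expression.

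The only step needing care is fixing the ordering and signs. Specifically, we must confirm that $\lambda_2>0$ sits in position $2$ and that $\lambda_{3n-1}$ sits just before the eigenvalue $-n$. Both facts are guaranteed by the root localization in Theorem \ref{spec_hn}.
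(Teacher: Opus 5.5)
Your proof is correct and follows the paper's argument: you align the two ordered spectra, use $\lambda_1+\lambda_2+\lambda_{3n-1}=n$ (from Vieta for $q$, or equivalently the zero trace), and resolve the absolute values using the root localization in Theorem~\ref{spec_hn}. You are slightly more explicit than the paper about the positional alignment and about why $n+\lambda_{3n-1}=\Lambda-\lambda_2>0$.
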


\begin{proof}
As we have noticed in the introductory part, the spectrum of $K_{n,n,n}$ consists of the following eigenvalues $2n$, $0$ of the multiplicity $3n-3$, and $-n$ of the multiplicity $2$. Hence, having in mind Theorem \ref{spec_hn}, we easily find
\begin{align*}
\sigma(K_{n,n,n},\mathcal{H}_{n}) = & \,|2n-\lambda_1(\mathcal{H}_{n})|+|\lambda_2(\mathcal{H}_{n})|+|n+\lambda_{3n-1}(\mathcal{H}_{n})|\\
                                  = & \,3n-\lambda_1(\mathcal{H}_{n})+\lambda_2(\mathcal{H}_{n})+\lambda_{3n-1}(\mathcal{H}_{n})=4n-2\lambda_1(\mathcal{H}_{n})=2\Lambda,
\end{align*}
since the sum of the adjacency eigenvalues of a graph is equal to zero, i.e. $\lambda_1(\mathcal{H}_{n})+\lambda_2(\mathcal{H}_{n})+\lambda_{3n-1}(\mathcal{H}_{n})=n$.

In a similar way, we have 
\begin{align*}
\sigma^{(2)}(K_{n,n,n},\mathcal{H}_{n})&=(2n-\lambda_1(\mathcal{H}_{n}))^2+\lambda_2^2(\mathcal{H}_{n})+(n+\lambda_{3n-1}(\mathcal{H}_{n}))^2\\
&=\Lambda^2+\lambda_2^2(\mathcal{H}_{n})+(\Lambda-\lambda_2(\mathcal{H}_{n}))^2,
\end{align*}
wherefrom the proof follows.
\end{proof}

\begin{remark}\label{remark}
For $n\geq 3$, from Lemma \ref{spec_bounds_hn} it follows $0<\Lambda<\frac{1}{2}+\frac{1}{3n}$.
\end{remark}

\section{Counterexamples to the conjectures}

In this section, we present observations and proofs which disprove Conjectures \ref{conjecture:triparts:1} and \ref{conjecture:triparts:2}. In support of that, the following results, which have been proved in \cite[Lemma 2]{Harary} and in \cite[Corollary 1]{Harary}, respectively, will be of help.

\begin{lemma}\label{comp_multi_part}
For a complete $r$-partite graph $K_{p_1,p_2,\ldots,p_r}$, where $\sum_{i=1}^{r}p_i=p$ and $p_1\leq p_2\leq\cdots\leq p_r$, the characteristic polynomial $P_{K_{p_1,p_2,\ldots,p_r}}(x)$ is given by 
\begin{equation*}
P_{K_{p_1,p_2,\ldots,p_r}}(x)=x^{p-r}\,\left[\prod\limits_{i=1}^{r} (x+p_i)-\sum\limits_{i=1}^{r} p_i\prod\limits_{j=1,j\neq i}^{r} (x+p_j)\right].
\end{equation*}
\end{lemma}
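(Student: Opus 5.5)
We prove the formula by computing $\det(xI_p-A)$ directly for $A=A(K_{p_1,\ldots,p_r})$. The adjacency matrix can be written as $A=J_p-B$, where $J_p$ is the all-ones $p\times p$ matrix and $B=\operatorname{diag}(J_{p_1},\ldots,J_{p_r})$ is block diagonal. Hence $xI_p-A=xI_p+B-J_p=M-\mathbf{j}\mathbf{j}^T$, where $M=xI_p+B$ and $\mathbf{j}$ is the all-ones vector. By the matrix determinant lemma,
\begin{equation*}
\det(xI_p-A)=\det(M)\left(1-\mathbf{j}^TM^{-1}\mathbf{j}\right),
\end{equation*}
valid for all $x$ at which $M$ is invertible. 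Both sides are polynomials in $x$, so the identity extends to all $x$.

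Next we evaluate the two factors. The matrix $M$ is block diagonal with blocks $xI_{p_i}+J_{p_i}$. The block $J_{p_i}$ has eigenvalues $p_i$ (once) and $0$ (with multiplicity $p_i-1$). Therefore
\begin{equation*}
\det(xI_{p_i}+J_{p_i})=x^{p_i-1}(x+p_i),\qquad \det M=x^{p-r}\prod_{i=1}^r(x+p_i).
\end{equation*}
The vector $\mathbf{j}$ restricted to block $i$ is the all-ones vector $\mathbf{j}_{p_i}$, which is an eigenvector of $xI_{p_i}+J_{p_i}$ with eigenvalue $x+p_i$. Consequently $(xI_{p_i}+J_{p_i})^{-1}\mathbf{j}_{p_i}=\mathbf{j}_{p_i}/(x+p_i)$, and
\begin{equation*}
\mathbf{j}^TM^{-1}\mathbf{j}=\sum_{i=1}^r\frac{p_i}{x+p_i}.
\end{equation*}

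Multiplying the two factors gives
\begin{equation*}
\det(xI_p-A)=x^{p-r}\left[\prod_{i=1}^r(x+p_i)-\sum_{i=1}^r p_i\prod_{j\neq i}(x+p_j)\right],
\end{equation*}
which is the claimed formula. The ordering assumption $p_1\le\cdots\le p_r$ plays no role in this computation. The only step needing care is applying the determinant lemma where $M$ may be singular. This is handled by the polynomial identity argument above, since $M$ is invertible for all but finitely many $x$.
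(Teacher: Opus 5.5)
Your proof is correct and self-contained. The paper gives no argument for this lemma; it quotes the formula from \cite[Lemma 2]{Harary}, so there is no in-text proof to match.

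One route available inside the paper is induction on $r$ via the join formula of Theorem \ref{join}, writing $K_{p_1,\ldots,p_r}=K_{p_1,\ldots,p_{r-1}}\nabla p_rK_1$. That induction is short, because the complement of $K_{p_1,\ldots,p_{r-1}}$ is a disjoint union of cliques, but it still has to track complements at each step. Your rank-one decomposition $xI_p-A=M-\mathbf{j}\mathbf{j}^T$ handles all $r$ parts at once. The observation that $\mathbf{j}$ restricted to each block is an eigenvector of $xI_{p_i}+J_{p_i}$ makes $\mathbf{j}^TM^{-1}\mathbf{j}$ immediate. Your computation is in fact the classical complement identity $P_{\overline{G}}(x)=(-1)^pP_G(-x-1)\bigl(1-\mathbf{j}^T((x+1)I_p+A(G))^{-1}\mathbf{j}\bigr)$ specialized to $G=K_{p_1}\cup\cdots\cup K_{p_r}$, for which $(x+1)I_p+A(G)$ is exactly your $M$.

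One point of precision. The expression $\det(M)(1-\mathbf{j}^TM^{-1}\mathbf{j})$ is a priori only rational in $x$, so ``both sides are polynomials'' needs a word of justification. You can use $\det(M)\,\mathbf{j}^TM^{-1}\mathbf{j}=\mathbf{j}^T\operatorname{adj}(M)\,\mathbf{j}$. Alternatively, note that your final expression is visibly a polynomial and agrees with $\det(xI_p-A)$ off the finite set $\{0,-p_1,\ldots,-p_r\}$. Your remark that the ordering $p_1\le\cdots\le p_r$ plays no role is also right.
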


\begin{corollary}\label{cmp_largest_eigen}
For a complete $r$-partite graph $K_{p_1,p_2,\ldots,p_r}$, where $\sum_{i=1}^{r}p_i=p$ and $p_1\leq p_2\leq\cdots\leq p_r$, the largest eigenvalue satisfies
\begin{equation*}
p-p_r\leq\lambda_1(K_{p_1,p_2,\ldots,p_r})\leq p-p_1.
\end{equation*}
\end{corollary}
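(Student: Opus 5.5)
The bound is $p-p_r\le\lambda_1\le p-p_1$, and I would prove it by locating $\lambda_1$ as the unique root of a secular equation. The starting point is Lemma \ref{comp_multi_part}. For $x>0$ with $x\neq -p_i$, divide the bracketed factor by $\prod_{i=1}^r(x+p_i)$. The nonzero eigenvalues of $K_{p_1,\ldots,p_r}$ are then exactly the solutions of
\begin{equation*}
f(x):=\sum_{i=1}^{r}\frac{p_i}{x+p_i}=1 .
\end{equation*}
Some care is needed with the poles $x=-p_i$, but these are all negative, so they do not affect the positive eigenvalue. Since the graph is connected and has at least one edge when $r\ge 2$, we have $\lambda_1>0$. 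So $\lambda_1$ is a positive root of the bracket, hence a positive solution of $f(x)=1$.

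On $(0,\infty)$ each term $p_i/(x+p_i)$ is strictly decreasing. Therefore $f$ is strictly decreasing, with $f(0^+)=r\ge 1$ and $f(x)\to 0$ as $x\to\infty$. This gives exactly one positive solution, which must be $\lambda_1$. Monotonicity then turns the bound into two evaluations: it suffices to show
\begin{equation*}
f(p-p_r)\ge 1 \quad\text{and}\quad f(p-p_1)\le 1 .
\end{equation*}
If $r=1$ the graph is edgeless and $\lambda_1=0=p-p_1=p-p_r$, so assume $r\ge 2$; then $p-p_r>0$ and the evaluation points lie in $(0,\infty)$.

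For the lower bound, take $x=p-p_r$. For every $i$ we have $p_i\le p_r$, so $x+p_i\le p$ and hence
\begin{equation*}
f(x)\ge\sum_i\frac{p_i}{p}=1 .
\end{equation*}
For the upper bound, take $x=p-p_1$. Now $p_i\ge p_1$ gives $x+p_i\ge p$, so $f(x)\le\sum_i p_i/p=1$. Combining these with the strict decrease of $f$ yields $p-p_r\le\lambda_1\le p-p_1$.

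The only step that needs real care is justifying that $\lambda_1$ is the unique positive root of the bracket. An alternative route is Perron–Frobenius applied to the $r\times r$ quotient matrix $B$ of the equitable partition, where $B_{ij}=p_j$ for $i\ne j$ and $B_{ii}=0$. The row sums of $B$ are $p-p_i$, and the spectral radius lies between the minimum and maximum row sums. This gives the same bound directly, and I would use it as a cross-check.
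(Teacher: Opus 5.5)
Your proof is correct, and it takes a different route from the one the paper relies on. The paper gives no argument of its own here; it quotes the bound from \cite{Harary}. Within the paper, the shortest derivation uses Theorem \ref{cmp_intelace} together with $s_1=0$. The only nonzero eigenvalues are $\lambda_1$ and the $r-1$ negative ones, so $\lambda_1=-\sum_{k=2}^{r}\lambda_{p-r+k}$. Summing $p_{k-1}\le-\lambda_{p-r+k}\le p_k$ over $k=2,\ldots,r$ then gives $p-p_r\le\lambda_1\le p-p_1$ immediately.

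Your route needs only the characteristic polynomial from Lemma \ref{comp_multi_part}. On $(0,\infty)$ the factor $x^{p-r}\prod_i(x+p_i)$ is positive, so the positive eigenvalues are exactly the positive roots of $f(x)=1$. Strict monotonicity of $f$ then finishes the argument, together with the comparisons $x+p_i\le p$ at $x=p-p_r$ and $x+p_i\ge p$ at $x=p-p_1$.

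The two approaches trade off as follows. Yours is more elementary and self-contained: it avoids the interlacing theorem, which is a considerably stronger statement. As a by-product it also re-proves that $K_{p_1,\ldots,p_r}$ has exactly one positive eigenvalue, the relevant case of Proposition \ref{lambda2=0}. The interlacing route is a one-line consequence once that structural result is available, and it additionally locates the negative eigenvalues.

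Your quotient-matrix cross-check is a third valid route. For it to stand on its own, you need to cite the standard fact that the spectral radius of the quotient matrix of an equitable partition equals $\lambda_1$ of the graph.

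One wording fix: you claim that all nonzero eigenvalues solve $f(x)=1$, which is false when part sizes repeat. For instance, $-n$ is a double eigenvalue of $K_{n,n,n}$ and sits at a pole of $f$. State the claim for positive eigenvalues only, which is all you use.
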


According to Lemma \ref{comp_multi_part}, the characteristic polynomial of the complete tripartite graph $K_{a,b,c}$, where $a,b,c\geq 1$, is
\begin{align*}
P_{K_{a,b,c}}(x) = &\, x^{a+b+c-3}\cdot(x^3-(ab+ac+bc)x-2abc)\\
                 = &\, x^{a+b+c-3}\cdot(x^3-\alpha x-\beta)=x^{a+b+c-3}\cdot\phi(x),
\end{align*}
where $\phi(x)=x^3-\alpha x-\beta$, $\alpha=ab+ac+bc$ and $\beta=2abc$. This means that the spectrum of $K_{a,b,c}$ consists of $0$ with the multiplicity $a+b+c-3$, and three eigenvalues which are the roots of the polynomial $\phi(x)$. 

The next statement can be found in \cite[Theorem 6.7]{CvDoobSachs}.

\begin{proposition}\label{lambda2=0}
A graph has exactly one positive eigenvalue if and only if its non-isolated vertices form a complete multipartite graph.
\end{proposition}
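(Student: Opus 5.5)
The plan is to prove the two directions separately, working with the adjacency matrix $A=A(G)$ and its rank. Since $A$ is real symmetric, by Sylvester's law of inertia the number of positive eigenvalues equals the maximal dimension of a subspace on which the quadratic form $\mathbf{x}^TA\mathbf{x}$ is positive definite. Isolated vertices contribute only zero eigenvalues, so throughout we may delete them and assume $G$ has no isolated vertices and at least one edge. Then $\lambda_1(G)>0$, because the trace of $A$ is $0$ and $A\neq 0$.

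For the \emph{if} direction, suppose the non-isolated vertices form $K_{p_1,\ldots,p_r}$ with $r\ge 2$. Lemma \ref{comp_multi_part} gives the characteristic polynomial $x^{p-r}f(x)$ with
\[
f(x)=\prod_i (x+p_i)-\sum_i p_i\prod_{j\ne i}(x+p_j).
\]
We show that $f$ has exactly one positive root. For $x>0$ we can write
\[
f(x)=\prod_i(x+p_i)\Bigl(1-\sum_i \frac{p_i}{x+p_i}\Bigr).
\]
The function $g(x)=\sum_i p_i/(x+p_i)$ is strictly decreasing on $(0,\infty)$, with $g(0^+)=r>1$ and $g(x)\to 0$ as $x\to\infty$. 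Hence $g(x)=1$ has exactly one positive solution, and $f$ has exactly one positive root, simple by strict monotonicity.

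For the \emph{only if} direction, assume $G$ has no isolated vertices and exactly one positive eigenvalue. We use two standard facts.
\begin{itemize}
\item[(i)] By eigenvalue interlacing (Cauchy), every induced subgraph $H$ of $G$ satisfies $\lambda_2(H)\le\lambda_2(G)\le 0$.
\item[(ii)] $\lambda_2(2K_2)=1>0$ and $\lambda_2(P_4)=(\sqrt5-1)/2>0$.
\end{itemize}
So $G$ contains neither $2K_2$ nor $P_4$ as an induced subgraph. Also $\lambda_2(K_1\cup K_2)=0$ and $\lambda_2(P_3)=0$, so these give no contradiction. The goal is to show that non-adjacency is an equivalence relation on $V(G)$, since then the classes are independent sets that are pairwise completely joined, i.e.\ $G$ is complete multipartite. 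Reflexivity and symmetry are trivial. The key case is: if $u\not\sim v$ and $v\not\sim w$ with $u,v,w$ distinct, then $u\not\sim w$.

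Suppose for contradiction that $u\sim w$, so $\{u,v,w\}$ induces $K_2\cup K_1$. Since $v$ is not isolated, it has a neighbour $z\notin\{u,w\}$. Consider the subgraph induced on $\{u,w,v,z\}$.
\begin{itemize}
\item If $z$ is adjacent to neither $u$ nor $w$, we get $2K_2$.
\item If $z$ is adjacent to exactly one of $u,w$, we get $P_4$.
\item If $z$ is adjacent to both $u$ and $w$, we get the paw (a triangle $uwz$ with the pendant edge $zv$).
\end{itemize}
The main obstacle is this third case, where we need the paw to have a second positive eigenvalue. Its characteristic polynomial is $x^4-4x^2-2x+1$. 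Evaluating gives value $1$ at $x=0$ and $-4$ at $x=1$, so there is a root in $(0,1)$. Together with $\lambda_1>0$ this gives two positive eigenvalues, and by (i) this contradicts the hypothesis. The first two cases contradict (i) via (ii). Hence non-adjacency is transitive, $G$ is complete multipartite, and the proposition follows.
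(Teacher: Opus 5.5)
The paper does not prove this statement; it quotes it (Smith's theorem) from the book of Cvetkovi\'c, Doob and Sachs, Theorem 6.7. Your argument is correct and is essentially the standard proof found there.

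For the ``only if'' direction, you use Cauchy interlacing together with the three forbidden induced subgraphs $2K_2$, $P_4$ and the paw, each of which has $\lambda_2>0$; this shows that non-adjacency is transitive when there are no isolated vertices. For the ``if'' direction, the monotonicity argument on the characteristic polynomial of $K_{p_1,\ldots,p_r}$ is sound, including the simplicity of the positive root.

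The one imprecision is in the paw case. A root in $(0,1)$ ``together with $\lambda_1>0$'' gives two positive eigenvalues only once you know $\lambda_1$ is not that root. This is immediate: $x^4-4x^2-2x+1$ equals $-4$ at $x=1$ and tends to $+\infty$, so there is a second positive root in $(1,\infty)$. Alternatively, $\lambda_1(\text{paw})\ge\lambda_1(K_3)=2$ by interlacing. Indeed, the polynomial factors as $(x+1)(x^3-x^2-3x+1)$, with positive roots approximately $0.31$ and $2.17$.
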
 

Accordingly, based on Proposition \ref{lambda2=0}, exactly one root of the polynomial $\phi(x)$, i.e. one of the eigenvalues of $K_{a,b,c}$ is positive. Since $\beta>0$, by applying Vieta's formulas, one can easily verify that the remaining two roots of $\phi(x)$ are negative reals. 

\begin{lemma}\label{helpful}
Let $a,b,c\geq1$, where $a+b+c\leq3n$, and let $t=3n-a-b-c$. Then
$$\sigma^{(2)}(K_{n,n,n},tK_{1}\cup K_{a,b,c})=6n^{2}+2\alpha-6n\lambda_1(K_{a,b,c}),$$
where $\alpha=ab+ac+bc$.
\end{lemma}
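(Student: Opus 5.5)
We list both spectra in decreasing order and compute $\sigma^{(2)}$ by expanding the squares, using the spectral moments $s_1=0$ and $s_2=2m$.

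\textbf{Ordering the spectrum.} By the discussion following Lemma~\ref{comp_multi_part} and Proposition~\ref{lambda2=0}, $\phi(x)=x^3-\alpha x-\beta$ has exactly one positive root $\mu_1=\lambda_1(K_{a,b,c})$ and two negative roots $\mu_2,\mu_3$. Hence the ordered spectrum of $tK_1\cup K_{a,b,c}$ is $\mu_1$, then $0$ with multiplicity $t+(a+b+c-3)=3n-3$, then $\mu_2\ge\mu_3$. This matches position by position the ordered spectrum of $K_{n,n,n}$: $2n$, then $0$ with multiplicity $3n-3$, then $-n,-n$. The zeros cancel, so
$$\sigma^{(2)}=(2n-\mu_1)^2+(n+\mu_2)^2+(n+\mu_3)^2.$$

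\textbf{Expanding.} Expanding gives
$$6n^2+(\mu_1^2+\mu_2^2+\mu_3^2)-4n\mu_1+2n(\mu_2+\mu_3).$$
By Vieta's formulas for $\phi$ (whose $x^2$ coefficient is $0$), $\mu_2+\mu_3=-\mu_1$. Also, since the remaining eigenvalues are zero,
$$\mu_1^2+\mu_2^2+\mu_3^2=s_2(K_{a,b,c})=2\alpha,$$
because $K_{a,b,c}$ has $ab+ac+bc=\alpha$ edges. Equivalently, $(\sum\mu_i)^2-2\sum_{i<j}\mu_i\mu_j=0-2(-\alpha)$. Substituting these, the linear terms become $-4n\mu_1-2n\mu_1=-6n\mu_1$, which yields the claimed formula $6n^2+2\alpha-6n\lambda_1(K_{a,b,c})$.

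\textbf{Main obstacle.} The only delicate point is the ordering step: confirming that the two negative roots occupy exactly the last two positions, aligned with the two eigenvalues $-n$. This follows from the sign information established above, and it holds for every $t\ge 0$, since the number of zeros is always $3n-3$.
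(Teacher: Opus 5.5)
Your proposal is correct and follows essentially the same route as the paper: align the ordered spectra so that the $3n-3$ zeros cancel, expand the three squares, and apply $s_1=0$ (your Vieta identity $\mu_2+\mu_3=-\mu_1$) together with $s_2=2\alpha$. You also justify explicitly that the two negative roots occupy the last two positions, which the paper takes for granted from the preceding discussion.
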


\begin{proof}
The eigenvalues of $K_{n,n,n}$ are $2n$, $0$ with the multiplicity $3n-3$ and $-n$ with the multiplicity $2$, while the spectrum of $tK_{1}\cup K_{a,b,c}$ consists of $\lambda_1(tK_{1}\cup K_{a,b,c})$, $0$ with the multiplicity $3n-3$, $\lambda_{3n-1}(tK_{1}\cup K_{a,b,c})$ and $\lambda_{3n}(tK_{1}\cup K_{a,b,c})$. Let us note that the eigenvalues $\lambda_1(tK_{1}\cup K_{a,b,c})$, $\lambda_{3n-1}(tK_{1}\cup K_{a,b,c})$ and $\lambda_{3n}(tK_{1}\cup K_{a,b,c})$ are actually non-zero eigenvalues of $K_{a,b,c}$. Hence, we easily find
\begin{align*}
\sigma^{(2)}(K_{n,n,n},tK_{1}\cup K_{a,b,c}) = &\, (2n-\lambda_1(tK_{1}\cup K_{a,b,c}))^2+(n+\lambda_{3n-1}(tK_{1}\cup K_{a,b,c}))^2\\
                                               &\, +(n+\lambda_{3n}(tK_{1}\cup K_{a,b,c}))^2\\
                                             = &\, 6n^2-4n\lambda_{1}(tK_{1}\cup K_{a,b,c})+2n\,\lambda_{3n-1}(tK_{1}\cup K_{a,b,c})\\
                                               &\, +2n\,\lambda_{3n}(tK_{1}\cup K_{a,b,c})+\lambda_{1}^2(tK_{1}\cup K_{a,b,c})\\
                                               &\, +\lambda_{3n-1}^2(tK_{1}\cup K_{a,b,c})+\lambda_{3n}^2(tK_{1}\cup K_{a,b,c})\\
                                             = &\, 6n^{2}+2\alpha-6n\lambda_1(tK_{1}\cup K_{a,b,c}),
\end{align*} 
where it is used that the first and the second spectral moment of $K_{a,b,c}$, that is of $tK_{1}\cup K_{a,b,c}$ are $s_1(tK_{1}\cup K_{a,b,c})=0$ and $s_2(tK_{1}\cup K_{a,b,c})=2\alpha$, respectively.
\end{proof}

Let us now consider the special case $K_{n-1,n,n+1}$ of the complete tripartite graph $K_{a,b,c}$. Here, $\alpha=3n^2-1$ and $\beta=2\,(n^3-n)$. According to Corollary \ref{cmp_largest_eigen} and Proposition \ref{lambda2=0}, the largest eigenvalue $\lambda_1(K_{n-1,n,n+1})$ of this graph is its only positive eigenvalue which belongs to the interval $[2n-1,2n+1]$. But, since $\phi(0)=-2n(n^2-1)<0$ and $\phi(2n)=4n>0$, for $n\geq2$, we have $\lambda_1(K_{n-1,n,n+1})\in[2n-1,2n)$. Therefore, $\lambda_1(K_{n-1,n,n+1})=2n+\delta$, where $\delta\in[-1,0)$, and 
\begin{equation*}
0=\phi(\lambda_1(K_{n-1,n,n+1}))=\phi(2n+\delta)=4n+\delta^3+6n\delta^2+(9n^2+1)\delta.
\end{equation*}   
Let us denote $\Phi(\delta)=4n+\delta^3+6n\delta^2+(9n^2+1)\delta$, and let us notice that $\Phi^{\prime}(\delta)=3\delta^2+12n\delta+9n^2+1>0$, for $\delta\in[-1,0)$ and $n\geq 2$, as well as $\Phi(0)=4n>0$ and $\Phi(-1)=-9n^2+10n-2<0$. This means that $\Phi(\delta)$ has the unique root $\delta=\delta_n$ in the interval $(-1,0)$, where $\Phi(\delta)$ is increasing. It can be easily verified that $\Phi\left(-\frac{4}{9n}\right)=\frac{4}{729n^3}(135n^2-16)>0$ and $\Phi\left(-\frac{1}{n}\right)=-\frac{5n^4-5n^2+1}{n^3}<0$, for $n\geq2$, which gives
\begin{equation}\label{root_delta}
\delta_n\in\left(-\frac{1}{n},-\frac{4}{9n}\right).
\end{equation}  

Now, we are ready to compute and estimate the spectral distances between complete tripartite graphs $K_{n,n,n}$ and $K_{n-1,n,n+1}$.

\begin{lemma}\label{euclid_dist}
For every $n\geq 2$, 
$$\sigma^{(2)}(K_{n,n,n}, K_{n-1,n,n+1})=6n(2n-\lambda_1(K_{n-1,n,n+1}))-2>\frac{2}{3}.$$
\end{lemma}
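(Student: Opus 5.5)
The plan is to specialize Lemma \ref{helpful} with $a=n-1$, $b=n$, $c=n+1$ and $t=0$, and then to bound the resulting expression using the interval (\ref{root_delta}) for $\delta_n$.

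\textbf{The identity.} For $K_{n-1,n,n+1}$ we have $\alpha=3n^2-1$, so Lemma \ref{helpful} gives
$$\sigma^{(2)}(K_{n,n,n},K_{n-1,n,n+1})=6n^2+2(3n^2-1)-6n\lambda_1(K_{n-1,n,n+1})=12n^2-2-6n\lambda_1(K_{n-1,n,n+1}).$$
Writing $12n^2=6n\cdot 2n$, this becomes $6n(2n-\lambda_1(K_{n-1,n,n+1}))-2$, which is the claimed identity. No hypotheses beyond $a,b,c\ge 1$ are needed, and these hold for $n\ge 2$.

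\textbf{The inequality.} Write $\lambda_1(K_{n-1,n,n+1})=2n+\delta_n$. The distance then equals $-6n\delta_n-2$. By (\ref{root_delta}) we have $\delta_n<-\frac{4}{9n}$, so $-6n\delta_n>\frac{24}{9}=\frac{8}{3}$. Hence the distance exceeds $\frac{8}{3}-2=\frac{2}{3}$, and the inequality is strict.

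\textbf{Main obstacle.} The only delicate point is the upper endpoint $-\frac{4}{9n}$ in (\ref{root_delta}), which the text establishes via the sign computation $\Phi(-\frac{4}{9n})>0$ together with the monotonicity of $\Phi$. We take both as given, so the proof reduces to the direct substitution above.
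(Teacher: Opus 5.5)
Your proposal is correct and matches the paper's proof: you apply Lemma~\ref{helpful} with $(a,b,c)=(n-1,n,n+1)$ and $\alpha=3n^2-1$, then write $\lambda_1(K_{n-1,n,n+1})=2n+\delta_n$ and use $\delta_n<-\frac{4}{9n}$ from (\ref{root_delta}) to get $-6n\delta_n-2>\frac{2}{3}$. The paper carries out exactly these steps; you simply spell out the arithmetic in more detail.
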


\begin{proof}
By applying Lemma \ref{helpful} for $(a,b,c)=(n-1,n,n+1)$ and having in mind previous observations together with the estimation (\ref{root_delta}), we find 
$$\sigma^{(2)}(K_{n,n,n}, K_{n-1,n,n+1})=6n(2n-\lambda_1(K_{n-1,n,n+1}))-2=-6n\delta_n-2>\frac{2}{3}.$$
\end{proof}

In proving the subsequent lemma we use the \emph{Interlacing Theorem for Complete Multipartite Graphs} \cite[Theorem 1]{Harary}.

\begin{theorem}\label{cmp_intelace}
The $r-1$ negative eigenvalues $\lambda_{p-r+2},\ldots,\lambda_p$ of a complete $r$-partite graph $K_{p_1,p_2,\ldots,p_r}$, where $\sum_{i=1}^{r}p_i=p$ and $p_1\leq p_2\leq\cdots\leq p_r$, satisfy the inequalities
$$p_1\leq-\lambda_{p-r+2}\leq p_2\leq -\lambda_{p-r+3}\leq p_3\leq\cdots\leq p_{r-1}\leq -\lambda_p\leq p_r.$$ 
\end{theorem}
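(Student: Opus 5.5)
The plan is to reduce the statement to a secular equation read off from Lemma \ref{comp_multi_part}, and then count sign changes interval by interval. Write $f(x)=\prod_{i=1}^{r}(x+p_i)-\sum_{i=1}^{r}p_i\prod_{j\neq i}(x+p_j)$. By Lemma \ref{comp_multi_part} the nonzero eigenvalues of $K_{p_1,\ldots,p_r}$ are exactly the $r$ roots of $f$, counted with multiplicity. By Proposition \ref{lambda2=0} exactly one of them is positive. Since $f(0)=(1-r)\prod p_i\neq 0$ for $r\ge 2$, the remaining $r-1$ roots are negative, so they are $\lambda_{p-r+2},\ldots,\lambda_p$. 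Substituting $\mu=-x$, it suffices to locate the $r-1$ positive roots of $F(\mu)=f(-\mu)$.

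First I treat distinct parts, $p_1<p_2<\cdots<p_r$. For $\mu\neq p_i$,
$$F(\mu)=\prod_{i=1}^{r}(p_i-\mu)\Bigl(1-h(\mu)\Bigr),\qquad h(\mu)=\sum_{i=1}^{r}\frac{p_i}{p_i-\mu}.$$
Each summand of $h$ is strictly increasing on every interval avoiding the poles, since its derivative is $p_i/(p_i-\mu)^2>0$. On each open interval $(p_k,p_{k+1})$, $h$ runs from $-\infty$ to $+\infty$, so $h=1$ has exactly one solution there. Moreover $F(p_k)=-p_k\prod_{j\neq k}(p_j-p_k)\neq 0$, so no root of $F$ sits at a $p_k$. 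This gives $r-1$ roots of $F$, one strictly inside each $(p_k,p_{k+1})$. That is all of the positive roots, so the inequalities hold, in fact strictly.

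Next I handle repeated parts. Suppose a value $q$ occurs $m\ge 2$ times among the $p_i$. Every term of $f$ contains at least $m-1$ factors $(x+q)$, so $(x+q)^{m-1}$ divides $f$ and $-q$ is an eigenvalue of multiplicity at least $m-1$. Factoring these out reduces the problem to the secular function over the distinct values $q_1<\cdots<q_s$ with weights $m_kq_k$. The same monotonicity argument then puts one root in each $(q_k,q_{k+1})$. The count is $\sum_k(m_k-1)+(s-1)=r-1$, so these are all the negative eigenvalues. Listed in increasing order of $-\lambda$, they are: $m_1-1$ copies of $q_1$, then one value in $(q_1,q_2)$, then $m_2-1$ copies of $q_2$, and so on. Matching this list against $p_1\le p_2\le\cdots\le p_r$ gives exactly the chain of inequalities in Theorem \ref{cmp_intelace}.

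Alternatively, one can avoid the bookkeeping: perturb the $p_i$ to distinct reals, which is legitimate because the formula for $f$ makes sense for real parameters. Then pass to the limit, using continuity of polynomial roots; non-strict inequalities survive the limit.

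The main obstacle I expect is this multiplicity bookkeeping for equal parts. I would try the perturbation argument first, since it makes the distinct case carry everything.
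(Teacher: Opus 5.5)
The paper gives no proof of this statement. It imports it as the interlacing theorem of \cite[Theorem 1]{Harary} and only applies it: in Lemma \ref{manhat_dist} for $K_{n-1,n,n+1}$, and in Lemma \ref{r_equal_3} for general $r\ge 4$, where parts may repeat. Your secular-equation argument is a correct, self-contained proof that uses only Lemma \ref{comp_multi_part} and Proposition \ref{lambda2=0}.

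\begin{itemize}
\item Writing $F(\mu)=\prod_i(p_i-\mu)\bigl(1-h(\mu)\bigr)$, the positive weights $p_i$ make $h$ strictly increasing from $-\infty$ to $+\infty$ on each gap. This gives one root per gap.
\item For repeated parts, the factorization $f(x)=\prod_k(x+q_k)^{m_k-1}g(x)$ holds, with $g$ the secular polynomial for weights $m_kq_k>0$. Together with the count $\sum_k(m_k-1)+(s-1)=r-1$, this settles that case.
\item Your index matching against $p_1\le\cdots\le p_r$ checks out block by block, including the cases $s=1$ and $m_s=1$.
\end{itemize}

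Compared with the citation, your route yields more: strict interlacing when the parts are distinct, and the exact multiplicity $m_k-1$ of the eigenvalue $-q_k$ in general.

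There is one caveat about the perturbation route you say you would try first. After perturbing the $p_i$ to distinct reals, $f$ is no longer a factor of the characteristic polynomial of a graph. So Proposition \ref{lambda2=0} can no longer be used to conclude that the $r-1$ roots found in the gaps are all the positive roots. You would need a direct count instead. For example, $F(0)=(1-r)\prod_i p_i<0$ and $F(\mu)\to+\infty$ as $\mu\to-\infty$ give an $r$-th real root in $(-\infty,0)$, which exhausts the degree. You would also need an order-preserving perturbation such as $p_i+i\varepsilon$, so that sorted roots converge to sorted roots. With these two additions the limit argument is sound. However, the bookkeeping you already wrote out is complete, so the perturbation is not needed at all.
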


\begin{lemma}\label{manhat_dist}
For every $n\geq 4$, 
\begin{align*}
\sigma(K_{n,n,n}, K_{n-1,n,n+1})&=2n-\lambda_1(K_{n-1,n,n+1})\\
&\quad+\sqrt{-3\lambda_1^2(K_{n-1,n,n+1})+12n^2-4}>\frac{2\sqrt{3}}{3}.
\end{align*}
\end{lemma}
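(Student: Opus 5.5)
The plan is to read off the spectrum of $K_{n-1,n,n+1}$ explicitly, pair it with the spectrum of $K_{n,n,n}$ in decreasing order, and remove the absolute values using Theorem \ref{cmp_intelace}. The closed form then follows from the first two spectral moments. The inequality follows from the estimate (\ref{root_delta}) on $\delta_n$.

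\emph{Spectrum and signs.} By the discussion after Lemma \ref{comp_multi_part}, the spectrum of $K_{n-1,n,n+1}$ consists of $\lambda_1=\lambda_1(K_{n-1,n,n+1})=2n+\delta_n$, the eigenvalue $0$ with multiplicity $3n-3$, and two negative roots $\mu_2\ge\mu_3$ of $\phi$. Here $\mu_2=\lambda_{3n-1}$ and $\mu_3=\lambda_{3n}$. Theorem \ref{cmp_intelace} with $(p_1,p_2,p_3)=(n-1,n,n+1)$ gives
\[
n-1\le -\mu_2\le n\le -\mu_3\le n+1,
\]
so $n+\mu_2\ge 0$ and $n+\mu_3\le 0$. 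Since $\lambda_1<2n$, pairing with $2n$, $0^{(3n-3)}$ and $(-n)^{(2)}$ yields
\[
\sigma(K_{n,n,n},K_{n-1,n,n+1})=(2n-\lambda_1)+(n+\mu_2)-(n+\mu_3)=(2n-\lambda_1)+(\mu_2-\mu_3).
\]

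\emph{Closed form.} From $s_1=0$ we get $\mu_2+\mu_3=-\lambda_1$. From $s_2=2\alpha=2(3n^2-1)$ we get $\mu_2^2+\mu_3^2=6n^2-2-\lambda_1^2$. Hence
\[
(\mu_2-\mu_3)^2=2(\mu_2^2+\mu_3^2)-(\mu_2+\mu_3)^2=12n^2-4-3\lambda_1^2.
\]
Since $\mu_2\ge\mu_3$, taking the nonnegative square root gives the stated formula.

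\emph{Inequality.} Substitute $\lambda_1=2n+\delta$ and consider
\[
f(\delta)=-\delta+\sqrt{-12n\delta-3\delta^2-4}.
\]
For $\delta<0$ both terms decrease as $\delta$ increases, so $f$ is decreasing on $(-1,0)$. By (\ref{root_delta}), $\delta_n<-\frac{4}{9n}$, hence
\[
\sigma=f(\delta_n)>f\!\left(-\tfrac{4}{9n}\right)=\frac{4}{9n}+\sqrt{\frac43-\frac{16}{27n^2}}.
\]
It remains to compare this with $\sqrt{4/3}=\frac{2\sqrt3}{3}$. Using $\sqrt a-\sqrt{a-e}=\frac{e}{\sqrt a+\sqrt{a-e}}\le \frac{e}{\sqrt a}$ with $a=\frac43$ and $e=\frac{16}{27n^2}$, the loss from the square root is at most
\[
\frac{16}{27n^2}\cdot\frac{\sqrt3}{2}=\frac{8\sqrt3}{27n^2}\approx\frac{0.51}{n^2}.
\]
This is less than $\frac{4}{9n}$ as soon as $n\ge 2$, and in particular for $n\ge4$. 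Therefore $\sigma>\frac{2\sqrt3}{3}$.

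\emph{Main obstacle.} The delicate point is the inequality: the bound (\ref{root_delta}) only narrowly beats the target. It has to be applied through the monotonicity of $f$, not by bounding the two summands separately with opposite endpoints of the interval for $\delta_n$.
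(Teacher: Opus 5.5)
Your proof is correct and follows the paper's route: interlacing to fix the signs of $n+\lambda_{3n-1}$ and $n+\lambda_{3n}$, then the first two symmetric functions of the negative roots to obtain $\sqrt{12n^2-4-3\lambda_1^2}$ (you use $s_1,s_2$ where the paper uses Vieta, which is equivalent), and finally the bound (\ref{root_delta}). The only real difference is the last step. You use monotonicity of $-\delta+\sqrt{-12n\delta-3\delta^2-4}$ on $\bigl[\delta_n,-\frac{4}{9n}\bigr]$, where the radicand is positive (it is not real-valued on all of $(-1,0)$). This needs only $\delta_n<-\frac{4}{9n}$ and gives the sharper radicand $\frac43-\frac{16}{27n^2}$, which works for all $n\ge2$. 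The paper instead bounds $-12n\delta_n>\frac{16}{3}$ and $-3\delta_n^2>-\frac{3}{n^2}$ separately, getting $\sqrt{\frac43-\frac{3}{n^2}}$, which still suffices for $n\ge4$. So your closing claim that separate bounding ``has to'' be avoided is too strong.
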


\begin{proof}
As we have already indicated, the non-zero eigenvalues $\lambda_1(K_{n-1,n,n+1})$, then $\lambda_{3n-1}(K_{n-1,n,n+1})$ and $\lambda_{3n}(K_{n-1,n,n+1})$ of the graph $K_{n-1,n,n+1}$ are the roots of the polynomial $\phi(x)=x^3-\alpha x-\beta=x^3-(3n^2-1)x-2\,(n^3-n)$. Vieta's formulas give 
\begin{align*}
& \lambda_{3n-1}(K_{n-1,n,n+1})+\lambda_{3n}(K_{n-1,n,n+1})=-\lambda_1(K_{n-1,n,n+1})=-2n-\delta_n, {\rm and}\\
& \lambda_{3n-1}(K_{n-1,n,n+1})\cdot\lambda_{3n}(K_{n-1,n,n+1})\\
& \qquad=\lambda_1^2(K_{n-1,n,n+1})-(3n^2-1)=(2n+\delta_n)^2-(3n^2-1),
\end{align*}
so we further have
\begin{align}
\nonumber & |\lambda_{3n-1}(K_{n-1,n,n+1})-\lambda_{3n}(K_{n-1,n,n+1})| = \sqrt{(\lambda_{3n-1}(K_{n-1,n,n+1})-\lambda_{3n}(K_{n-1,n,n+1}))^2}=\\
\nonumber & \sqrt{(\lambda_{3n-1}(K_{n-1,n,n+1})+\lambda_{3n}(K_{n-1,n,n+1}))^2-4\lambda_{3n-1}(K_{n-1,n,n+1})\lambda_{3n}(K_{n-1,n,n+1})}=\\
& \sqrt{-3\lambda_1^2(K_{n-1,n,n+1})+12n^2-4}.\label{abs_help}
\end{align}
According to Theorem \ref{cmp_intelace}, it holds
$$n-1\leq-\lambda_{3n-1}(K_{n-1,n,n+1})\leq n\leq -\lambda_{3n}(K_{n-1,n,n+1})\leq n+1,$$
so the spectral distance between the graphs under study is
\begin{equation}\label{dist_formula}
\sigma(K_{n,n,n}, K_{n-1,n,n+1}) = 2n-\lambda_1(K_{n-1,n,n+1})+|\lambda_{3n-1}(K_{n-1,n,n+1})-\lambda_{3n}(K_{n-1,n,n+1})|.
\end{equation}
Now, by implementing (\ref{abs_help}) into (\ref{dist_formula}) we obtain
\begin{equation}\label{final_dist_formula}
\sigma(K_{n,n,n}, K_{n-1,n,n+1})=2n-\lambda_1(K_{n-1,n,n+1})+\sqrt{-3\lambda_1^2(K_{n-1,n,n+1})+12n^2-4}.
\end{equation}
By substituting $\lambda_1(K_{n-1,n,n+1})=2n+\delta_n$, formula (\ref{final_dist_formula}) becomes
\begin{equation*}
\sigma(K_{n,n,n}, K_{n-1,n,n+1})=-\delta_n+\sqrt{-4-12n\delta_n-3\delta_n^2},
\end{equation*}
which, using (\ref{root_delta}), is estimated as follows
\begin{equation*}
\sigma(K_{n,n,n}, K_{n-1,n,n+1})>\frac{4}{9n}+\sqrt{\frac{4}{3}-\frac{3}{n^2}}.
\end{equation*}
For $n\geq4$, it holds
$$\sqrt{\frac{4}{3}}-\sqrt{\frac{4}{3}-\frac{3}{n^{2}}}\leq\frac{3/n^{2}}{\sqrt{4/3}+1}<\frac{1.393}{n^{2}}<\frac{4}{9n}<-\delta_{n},$$
since $\sqrt{\frac{4}{3}-\frac{3}{n^{2}}}\geq 1$ for $n\geq 3$, wherefrom the proof follows.
\end{proof}

We will now expose the main result of the section.

\begin{theorem}\label{refutation}
For every $n\geq3$, it holds
\begin{description}
    \item[(i)] $\sigma^{(2)}(K_{n,n,n},\mathcal{H}_{n})<\sigma^{(2)}(K_{n,n,n},K_{n-1,n,n+1})$,
    \item[(ii)] $\sigma(K_{n,n,n},\mathcal{H}_{n})<\sigma(K_{n,n,n},K_{n-1,n,n+1}).$
\end{description}
\end{theorem}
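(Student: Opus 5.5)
The plan is to compare the closed forms from Theorem \ref{hn_distances} with the lower bounds from Lemmas \ref{euclid_dist} and \ref{manhat_dist}. The bounds on $\Lambda$ from Remark \ref{remark} and on $\lambda_2(\mathcal{H}_n)$ from Lemma \ref{spec_bounds_hn} should make the comparison straightforward for large $n$. The small cases, where the lemmas do not apply or the estimates are too weak, will be handled separately.

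For (i), write $\mu=\lambda_2(\mathcal{H}_n)$. By Theorem \ref{hn_distances},
\[
\sigma^{(2)}(K_{n,n,n},\mathcal{H}_n)=2(\Lambda^2-\Lambda\mu+\mu^2).
\]
The quadratic form $\Lambda^2-\Lambda\mu+\mu^2$ is convex, so on the box $0<\Lambda<\frac12+\frac1{3n}$, $\frac12-\frac1{2n}<\mu<\frac12$ its maximum is attained at a corner. The corners give values close to $\frac14$. For example, at $\Lambda=\frac12+\frac1{3n}$ and $\mu=\frac12-\frac1{2n}$ one gets about $\frac14+\frac5{12n}+O(n^{-2})$. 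Hence $\sigma^{(2)}(K_{n,n,n},\mathcal{H}_n)<\frac12+\frac{c}{n}$, which is below $\frac23$ as soon as $n$ is moderately large ($n\ge3$ should already suffice once the corners are checked exactly). Lemma \ref{euclid_dist} gives $\sigma^{(2)}(K_{n,n,n},K_{n-1,n,n+1})>\frac23$ for $n\ge2$, and (i) follows. I will evaluate the four corner values explicitly at $n=3$ to confirm that each is below $\frac13$.

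For (ii), Theorem \ref{hn_distances} gives $\sigma(K_{n,n,n},\mathcal{H}_n)=2\Lambda<1+\frac2{3n}$. For $n\ge4$ this is at most $\frac76\approx1.167$, which is indeed below $\frac{2\sqrt3}{3}\approx1.1547$? No: $\frac76>1.1547$, so the crude bound fails at $n=4$. It only works once $\frac{2}{3n}<0.1547$, that is, for $n\ge5$. So for $n\ge5$ I combine Remark \ref{remark} with Lemma \ref{manhat_dist}.

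The cases $n=3$ and $n=4$ need a sharper argument, and this is the main obstacle. For $n=4$, I would first try sharpening the upper bound on $\Lambda$. One option is to evaluate $q$ at $2n-\frac12$ (or at a point like $2n-\frac12+\frac{1}{20n}$) and check the sign, which would give $\Lambda<\frac12+\varepsilon$ with small $\varepsilon$. Alternatively, I would use the stronger lower bound from the proof of Lemma \ref{manhat_dist}, namely $\frac4{9n}+\sqrt{\frac43-\frac3{n^2}}$. For $n=4$ this is about $0.111+1.067=1.178$, which exceeds $\frac76$, so that bound already settles $n=4$.

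For $n=3$, Lemma \ref{manhat_dist} does not apply, so I compute directly. The quantity $\sigma(K_{3,3,3},K_{2,3,4})=-\delta_3+\sqrt{-4-36\delta_3-3\delta_3^2}$ is controlled by \eqref{root_delta}, giving roughly $\frac4{27}+\sqrt{-4+\frac{16}{3}-\dots}\approx0.148+1.11$. On the other side, $2\Lambda<1+\frac29\approx1.222$. These intervals nearly touch, so I would either tighten $\delta_3$ numerically by evaluating $\Phi$ at intermediate points, or compute $\lambda_1(\mathcal{H}_3)$ as a root of $q(x)=x^3-3x^2-14x+6$ to sufficient accuracy (near $5.55$, giving $2\Lambda\approx0.9$). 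Either way the inequality follows.
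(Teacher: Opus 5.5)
Your plan follows the paper's proof. For (i) you use convexity and evaluate corners; for (ii) you compare $2\Lambda<1+\frac{2}{3n}$ from Remark \ref{remark} with Lemma \ref{manhat_dist} for $n\ge5$, and treat $n=3,4$ separately (the paper just says ``direct computation''). Your treatment of $n=4$ uses the intermediate bound $\frac{4}{9n}+\sqrt{\frac43-\frac{3}{n^2}}$ from the proof of Lemma \ref{manhat_dist}. That is valid: the bound holds for all $n\ge2$, and at $n=4$ it is about $1.1815>\frac76$. It is also a cleaner substitute for direct computation. However, two of your computations are wrong, and the second one sits exactly where the margin is thin.

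\textbf{The corner value in (i).} The corner $(\frac12+\frac1{3n},\frac12-\frac1{2n})$ gives $\frac14-\frac1{12n}+\frac{19}{36n^2}$, not $\frac14+\frac5{12n}+O(n^{-2})$. Taken at face value, your figure would exceed $\frac13$ at $n=3$. The worst corner is $(\frac12+\frac1{3n},\frac12)$, with value $\frac14+\frac1{6n}+\frac1{9n^2}$, and $2f<\frac23$ then reduces to $3n^2-6n-4>0$. Alternatively, the boxes shrink as $n$ grows, so a check at $n=3$ (maximum $\approx0.318$) does cover all $n\ge3$, but you need to say so.

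\textbf{The case $n=3$ in (ii).} The largest root of $q(x)=x^3-3x^2-14x+6$ is not near $5.55$; indeed $q(5.55)\approx6.85>0$. The root is $\lambda_1(\mathcal{H}_3)\approx5.3906$, so $2\Lambda\approx1.2188$, practically equal to the a priori bound $\frac{11}{9}\approx1.2222$. Computing $\lambda_1(\mathcal{H}_3)$ therefore buys nothing, and the margin has to come from the $K_{2,3,4}$ side. Bounding the two $\delta_3$-terms under the square root separately, at opposite ends of the interval for $\delta_3$, as in the proof of Lemma \ref{manhat_dist}, gives only $\frac4{27}+1\approx1.148<\frac{11}{9}$. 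That is why your intervals seem to touch.

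\textbf{The fix.} Note that $g(\delta)=-4-12n\delta-3\delta^2$ is decreasing for $\delta>-2n$. Hence $\delta_n<-\frac{4}{9n}$ yields $\sigma(K_{n,n,n},K_{n-1,n,n+1})>\frac{4}{9n}+\sqrt{\frac43-\frac{16}{27n^2}}$. At $n=3$ this is about $1.274>\frac{11}{9}$ (the true value is about $1.3257$). In fact this sharper bound exceeds $1+\frac{2}{3n}$ for every $n\ge3$: squaring reduces it to $\frac49t+\frac{52}{81}t^2<\frac13$ with $t=\frac1n\le\frac13$. So it proves (ii) uniformly, with no case split and no direct computation.
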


\begin{proof}
\begin{description}
  \item[(i)] According to Theorem \ref{hn_distances}, we have 
    $$\sigma^{(2)}(K_{n,n,n},\mathcal{H}_{n})=2\left( \Lambda^{2}-\Lambda\lambda_{2}(\mathcal{H}_{n})+\lambda_{2}^{2}(\mathcal{H}_{n})\right),$$
    where by Lemma \ref{spec_bounds_hn}, $\lambda_{2}(\mathcal{H}_{n})\in\left(\frac{1}{2}-\frac{1}{2n},\frac{1}{2}\right)$, while by Remark \ref{remark}, $\Lambda\in\left(0,\frac{1}{2}+\frac{1}{3n}\right)$. Let us denote $f(\Lambda,\lambda_{2}(\mathcal{H}_{n}))=\Lambda^{2}-\Lambda\lambda_{2}(\mathcal{H}_{n})+\lambda_{2}^{2}(\mathcal{H}_{n})$. Analysing the Hessian $\left(
                                                                                                                                                                  \begin{array}{cc}
                                                                                                                                                                    2 & -1 \\
                                                                                                                                                                    -1 & 2 \\
                                                                                                                                                                  \end{array}
                                                                                                                                                                \right)
    $
    of the function $f$, we find that $f$ is strictly convex. The only critical point of $f$ is $(0,0)$, where the global minimum is attained. Therefore, the supremum of the continuous function $f$ on the rectangle $\left(0,\frac{1}{2}+\frac{1}{3n}\right) \times \left(\frac{1}{2}-\frac{1}{2n},\frac{1}{2}\right)$ is equal to its maximum on the closure of the rectangle, that is, it is attained at one of the four corners of the rectangle. If $\Lambda=0$, we have $f(0,\lambda_{2}(\mathcal{H}_{n}))=\lambda_{2}^{2}(\mathcal{H}_{n})<\frac{1}{4},$ while for $\Lambda=\frac{1}{2}+\frac{1}{3n}$, we have $$f\left( \frac{1}{2}+\frac{1}{3n},\frac{1}{2}\right)=\frac{1}{4}+\frac{1}{6n}+\frac{1}{9n^{2}},$$
    and
    $$f\left( \frac{1}{2}+\frac{1}{3n},\frac{1}{2}-\frac{1}{2n}\right) =\frac{1}{4}+\frac{1}{6n}+\frac{1}{9n^{2}}+\frac{1}{2n}\left( \frac{5}{6n}-\frac{1}{2}\right),$$
    where the value in the latter expression does not exceed the value in the former one since $\frac{5}{6n}<\frac{1}{2}.$ Hence,
    $$\sigma^{(2)}(K_{n,n,n},\mathcal{H}_{n})=2f(\Lambda,\lambda_{2}(\mathcal{H}_{n}))\leq\frac{1}{2}+\frac{1}{3n}+\frac{2}{9n^{2}}.$$
    It holds $\frac{1}{2}+\frac{1}{3n}+\frac{2}{9n^{2}}<\frac{2}{3}$, i.e. $\sigma^{(2)}(K_{n,n,n},\mathcal{H}_{n})<\frac{2}{3}$ if and only if $3n^{2}-6n-4>0,$ which is true for $n\geq3.$ Now the proof follows from Lemma \ref{euclid_dist}.
    
  \item[(ii)] According to Theorem \ref{hn_distances} and Remark \ref{remark}, it holds
    $$\sigma(K_{n,n,n},\mathcal{H}_{n})=2\Lambda<1+\frac{2}{3n}<\frac{2\sqrt{3}}{3},$$ for $n\geq 5$. Now the proof follows from Lemma \ref{manhat_dist}. The cases $n=3$ and $n=4$ can be verified by direct computation.
\end{description}
\end{proof}

\begin{remark}
Since $\mathcal{H}_n$ is clearly not isomorphic to either $K_{n,n,n}$ or $K_{n-1,n,n+1}$, according to Theorem \ref{refutation}, both Conjectures \ref{conjecture:triparts:1} and \ref{conjecture:triparts:2} are false for every $n\geq 4$. Notice that the two inequalities of Theorem \ref{refutation} hold for $n = 3$ as well, which is precisely the case which was excluded in \cite{az-arscontemp}.
\end{remark}

The asymptotic behaviour of the four considered spectral distances between the graph $K_{n,n,n}$ and the graph $\mathcal{H}_{n}$, and between $K_{n,n,n}$ and the graph $K_{n-1,n,n+1}$ can be obtained from the formulas for polynomials $q(x)$ and $\Phi(\delta)$ by expanding the corresponding roots.

\begin{theorem}\label{asymptotic}
As $n\rightarrow+\infty,$ it holds
\begin{description}
  \item[(i)] $\sigma(K_{n,n,n},\mathcal{H}_{n})=1+\frac{7}{12n}+O(n^{-2})$, 
  \item[(ii)] $\sigma^{(2)}(K_{n,n,n},\mathcal{H}_{n})=\frac{1}{2}-\frac{1}{12n}+O(n^{-2})$,
  \item[(iii)] $\sigma(K_{n,n,n},K_{n-1,n,n+1})=\frac{2\sqrt{3}}{3}+\frac{4}{9n}+O(n^{-2})$,
  \item[(iv)] $\sigma^{(2)}(K_{n,n,n},K_{n-1,n,n+1})=\frac{2}{3}+\frac{40}{81n^{2}}+O(n^{-4})$.
\end{description}
\end{theorem}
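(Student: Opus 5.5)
The plan is to obtain two-term asymptotic expansions of the relevant roots, namely $\Lambda=2n-\lambda_1(\mathcal{H}_n)$ and $\lambda_2(\mathcal{H}_n)$ from $q$, and $\delta_n$ from $\Phi$. These are then substituted into the closed formulas already proved: Theorem~\ref{hn_distances} for (i)--(ii), and Lemmas~\ref{euclid_dist} and \ref{manhat_dist} for (iii)--(iv). In each case I would justify the expansion rigorously in the same way the paper localizes roots. For an ansatz $r=r_0+r_1/n$, evaluate the polynomial at $r_0+(r_1\pm C/n)/n$ for a suitable constant $C$, check opposite signs for large $n$, and conclude that the root lies within $O(n^{-2})$ of $r_0+r_1/n$. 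Lemma~\ref{spec_bounds_hn} and (\ref{root_delta}) guarantee that we are tracking the correct root.

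For $\lambda_2(\mathcal{H}_n)$, put $x=\tfrac12+d/n$ in $q(x)$. The $n^2$-order terms $-2n^2x+n^2$ vanish at $x=\tfrac12$. The order-$n$ terms give
\[
-2nd+n\left(-\tfrac14+\tfrac12-1\right)=0,
\]
so $d=-\tfrac38$ and $\lambda_2(\mathcal{H}_n)=\tfrac12-\tfrac{3}{8n}+O(n^{-2})$. For $\Lambda$, put $x=2n-\Lambda$ with $\Lambda=\tfrac12+e/n$ and expand $q(2n-\Lambda)$ in powers of $n$. The leading order forces $\Lambda\to\tfrac12$, consistent with Remark~\ref{remark}, and the next order determines $e$. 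I expect $e=\tfrac{7}{24}$, which gives (i) via $\sigma=2\Lambda=1+\tfrac{7}{12n}+O(n^{-2})$. For (ii), substitute both expansions into $2(\Lambda^2-\Lambda\lambda_2+\lambda_2^2)$:
\[
2\left(\tfrac14+\tfrac{e+d}{2n}\right)+O(n^{-2})=\tfrac12+\tfrac{e+d}{n}+O(n^{-2})=\tfrac12-\tfrac1{12n}+O(n^{-2}).
\]
This also serves as a consistency check on $e$.

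For the complete tripartite case, rewrite $\Phi(\delta)=0$ as the fixed-point relation
\[
\delta=-\frac{4n+6n\delta^2+\delta^3}{9n^2+1}.
\]
Using $\delta_n=O(1/n)$ from (\ref{root_delta}), one iteration gives $\delta_n=-\tfrac{4}{9n}+O(n^{-3})$. A second iteration gives
\[
\delta_n=-\tfrac{4}{9n}-\tfrac{20}{243n^3}+O(n^{-5}).
\]
The $O(n^{-5})$ error, rather than $O(n^{-4})$, comes from the odd symmetry $\Phi_{-n}(-\delta)=-\Phi_n(\delta)$, so the expansion of $\delta_n$ contains only odd powers of $1/n$. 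Then Lemma~\ref{euclid_dist} yields (iv):
\[
-6n\delta_n-2=\tfrac83-2+\tfrac{120}{243n^2}+O(n^{-4})=\tfrac23+\tfrac{40}{81n^2}+O(n^{-4}).
\]
For (iii), Lemma~\ref{manhat_dist} gives $\sigma=-\delta_n+\sqrt{-4-12n\delta_n-3\delta_n^2}$. The radicand equals $\tfrac43+O(n^{-2})$, so the square root is $\tfrac{2\sqrt3}{3}+O(n^{-2})$, and $-\delta_n=\tfrac4{9n}+O(n^{-3})$; adding these gives (iii).

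The main obstacle is the bookkeeping in expanding $q(2n-\Lambda)$: the cubic produces cancellations at orders $n^3$ and $n^2$, and the correct coefficient $e$ appears only at the next order. A slip there propagates into both (i) and (ii). I would organize this computation by first writing $q(2n-\Lambda)$ exactly as a polynomial in $\Lambda$ with coefficients polynomial in $n$, and only then inserting the ansatz. Making the $O(n^{-4})$ remainder in (iv) rigorous via the parity argument also needs some care. An alternative is an explicit sign check of $\Phi$ at $-\tfrac4{9n}-\tfrac{20}{243n^3}\pm\tfrac{C}{n^5}$.
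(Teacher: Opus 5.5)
Your proposal is correct, and the coefficients $d=-\tfrac38$, $e=\tfrac7{24}$ and $\delta_n=-\tfrac4{9n}-\tfrac{20}{243n^3}+O(n^{-5})$ all check out.

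\emph{Parts (i)--(ii).} You do the same coefficient matching as the paper; only the justification differs. The paper applies the Implicit Function Theorem to $F(z,t)=z^3-z^2-(2-t-t^2)z+t-t^2$, with $t=1/n$ and $z=t\lambda_1(\mathcal{H}_n)$, which gives an analytic expansion to any order. You bracket the roots by sign changes instead. That route is more elementary, and it also justifies the form $\lambda_2(\mathcal{H}_n)=\tfrac12+\beta/n+O(n^{-2})$, which the paper simply posits. The bookkeeping is short if you use
$q(2n-\Lambda)=3n^2(1-2\Lambda)+n(5\Lambda^2-\Lambda+1)-\Lambda^3-\Lambda$.
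At $\Lambda=\tfrac12+e/n$ the order-$n$ coefficient is $\tfrac74-6e$. At $\Lambda=\tfrac12+\tfrac7{24n}+s/n^2$ the value is $\tfrac{13}{24}-6s+O(n^{-1})$, so the sign check is immediate. Derive $e$ this way rather than treating (ii) as a consistency check, since (ii) is part of what you are proving.

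\emph{Parts (iii)--(iv).} The paper gives no proof here; it cites Lemmas 3.7 and 3.8 of Abdollahi and Zakeri. Your derivation from $\Phi$ and Lemmas~\ref{euclid_dist} and \ref{manhat_dist} therefore makes the theorem self-contained. The parity argument is unnecessary. For $|\delta|\le 1/n$ the map $T(\delta)=-(4n+6n\delta^2+\delta^3)/(9n^2+1)$ satisfies $|T'(\delta)|=O(n^{-2})$. So each substitution into $\delta_n=T(\delta_n)$ gains a factor $n^{-2}$, and two steps from $\delta=0$ already give the $O(n^{-5})$ error that (iv) needs.
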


\begin{proof}
We prove (i) and (ii); the expansions (iii) and (iv) are established in \cite[Lemmas 3.7 and 3.8]{az-arscontemp}.

Let us put $t=1/n$ and $z=t\lambda_1(\mathcal{H}_n)$. By (\ref{poly_q}), we have
$$F(z,t)=z^3-z^2-(2-t-t^2)z+t-t^2=0.$$
Since $F(2,0)=0$ and $F_z(2,0)=6\neq0$, where $F_z$ denotes the partial derivative of $F(z,t)$ with respect to $z$, the Implicit Function Theorem gives a unique analytic solution $z=z(t)$ near $t=0$ with $z(0)=2$. By Lemma \ref{spec_bounds_hn}, $t\lambda_1(\mathcal{H}_n)\to2$ as $t\to0$, when $n\to+\infty$.

Let $z(t)=2+at+bt^2+ct^3+O(t^4)$. Substitution into $F(z(t),t)=0$ and comparison of the coefficients of $t$, $t^2$ and $t^3$, respectively, give
$$ 6a+3=0,\qquad 6b+\frac{7}{4}=0,\qquad 6c+\frac{13}{24}=0,$$
where the latter two equations use the values already obtained for the preceding coefficients. Thus,
$$\lambda_1(\mathcal{H}_n)=\frac{z(t)}{t}=2n-\frac{1}{2}-\frac{7}{24n}-\frac{13}{144n^2}+O(n^{-3}).$$
By Theorem \ref{hn_distances}, we have
$$\sigma(K_{n,n,n},\mathcal{H}_n)=2\bigl(2n-\lambda_1(\mathcal{H}_n)\bigr)=1+\frac{7}{12n}+\frac{13}{72n^2}+O(n^{-3}),$$
which proves (i).

For (ii), note first that $\Lambda=2n-\lambda_1(\mathcal{H}_n)=\frac12+\frac{7}{24n}+O(n^{-2})$. Next, $\lambda_2(\mathcal{H}_n)\in(0,\frac12)$ by Lemma \ref{spec_bounds_hn}, and dividing $q(\lambda_2(\mathcal{H}_n))=0$ by $n^2$ gives
$$\frac{\lambda_2^3}{n^2}-\frac{\lambda_2^2}{n}-\Bigl(2-\frac1n-\frac1{n^2}\Bigr)\lambda_2+1-\frac1n=0,$$
where $\lambda_2=\lambda_2(\mathcal{H}_n)$. Hence $\lambda_2=\frac12+O(n^{-1})$, and writing $\lambda_2=\frac12+\frac{\beta}{n}+O(n^{-2})$ and comparing the coefficients of $n^{-1}$ we obtain $-\frac14-2\beta+\frac12-1=0$, i.e.\ $\beta=-\frac38$. Therefore
$$\Lambda^2=\frac14+\frac{7}{24n},\qquad \Lambda\lambda_2=\frac14-\frac{1}{24n},\qquad \lambda_2^2=\frac14-\frac{3}{8n}$$
up to $O(n^{-2})$, and by Theorem \ref{hn_distances},
$$\sigma^{(2)}(K_{n,n,n},\mathcal{H}_n)=2\bigl(\Lambda^2-\Lambda\lambda_2+\lambda_2^2\bigr)=2\Bigl(\frac14-\frac{1}{24n}\Bigr)+O(n^{-2})=\frac12-\frac1{12n}+O(n^{-2}),$$
which proves (ii).

\end{proof}

The last two expansions are in accordance with Lemmas 3.7 and 3.8 of \cite{az-arscontemp}. Notice that the conjectured values of the cospectrality of $K_{n,n,n}$ are therefore wrong by a constant factor, since by Theorem \ref{asymptotic} we have $\mathrm{cs}_{\mathcal{G}}(K_{n,n,n})\leq1+o(1)$ and $\mathrm{cs}_{\mathcal{G}}^{(2)}(K_{n,n,n})\leq\frac{1}{2}+o(1),$ instead of $\frac{2\sqrt{3}}{3}+o(1)$ and $\frac{2}{3}+o(1)$ (based on \cite[Lemmas 3.7 and 3.8]{az-arscontemp}).

\section{Complete multipartite graphs with isolated vertices}

In this section we consider the cospectrality of $K_{n,n,n}$ asymptotically with respect to the Euclidean distance, on the set of all complete multipartite graphs, possibly with isolated vertices added.

\begin{lemma}\label{r_equal_3}
Let $H=tK_{1}\cup K_{p_{1},p_2,\ldots,p_{r}}$ be a graph of order $3n$. If $\sigma^{(2)}(K_{n,n,n},H)<1$, then $r=3$.
\end{lemma}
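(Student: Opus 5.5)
The plan is to compare the ordered spectra of $K_{n,n,n}$ and $H$ position by position, using only the sign pattern of the spectrum of $H$. By Proposition \ref{lambda2=0} and Theorem \ref{cmp_intelace}, the spectrum of $H=tK_1\cup K_{p_1,\ldots,p_r}$ (with $p_1\le\cdots\le p_r$ and $p=\sum p_i$) has three parts: one positive eigenvalue (when $r\ge 2$), zero with multiplicity $3n-r$, and $r-1$ negative eigenvalues. These negative eigenvalues are the negative eigenvalues of $K_{p_1,\ldots,p_r}$ and occupy positions $3n-r+2,\ldots,3n$ of $\Gamma_H$. The spectrum of $K_{n,n,n}$ is $2n$, then $0$ in positions $2,\ldots,3n-2$, then $-n$ in positions $3n-1$ and $3n$. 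Since $\sigma^{(2)}$ is a sum of nonnegative squares, it suffices to exhibit one position where the difference has absolute value at least $1$. I will assume $n\ge 2$; for $n=1$ we have $3n=3$, so $r\le 3$, and the only remaining case $r\le 2$ is handled below.

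\textbf{Case $r\le 2$.} If $r=1$, then $H$ is edgeless and $\Gamma_H$ is the zero vector, so the difference at position $3n$ alone already gives a contribution $n^2\ge 1$. If $r=2$, then $H$ has exactly one negative eigenvalue, which sits at position $3n$. Hence $\lambda_{3n-1}(H)=0$, and position $3n-1$ contributes $(-n-0)^2=n^2\ge 1$. In both subcases $\sigma^{(2)}(K_{n,n,n},H)\ge 1$, contradicting the hypothesis.

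\textbf{Case $r\ge 4$.} Now $H$ has at least three negative eigenvalues, so $\lambda_{3n-2}(H)<0$, while $\lambda_{3n-2}(K_{n,n,n})=0$ because $2\le 3n-2$ for $n\ge 2$. To get a quantitative bound, I apply Theorem \ref{cmp_intelace} to $K_{p_1,\ldots,p_r}$. The third smallest eigenvalue $\lambda_{p-2}(K_{p_1,\ldots,p_r})$ has index $p-r+2+(r-4)$ in the chain of negative eigenvalues, so interlacing gives $-\lambda_{p-2}\ge p_{r-3}\ge 1$. This eigenvalue equals $\lambda_{3n-2}(H)$, since the isolated vertices only add zeros above the negative eigenvalues. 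Therefore position $3n-2$ contributes $(0-\lambda_{3n-2}(H))^2\ge 1$, again contradicting $\sigma^{(2)}<1$.

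Combining the two cases, only $r=3$ remains. The only delicate step is the index bookkeeping in the interlacing chain: one must check that the $(r-3)$-th negative eigenvalue from the top is indeed the one landing at position $3n-2$ of $\Gamma_H$, and that it is bounded below in absolute value by some $p_i\ge 1$. The other steps are immediate from the sign pattern of the spectrum.
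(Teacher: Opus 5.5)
Your proof is correct and follows the paper's argument. The cases $r=1$ and $r=2$ are ruled out by direct comparison; for $r=2$ the paper computes the whole sum, which contains exactly the $(-n)^2$ term at position $3n-1$ that you isolate. The case $r\ge 4$ is ruled out, as in the paper, via position $3n-2$ and the interlacing bound $-\lambda_{p-2}(K_{p_1,\ldots,p_r})\ge p_{r-3}\ge 1$. Your handling of $n=1$ is slightly more careful: there the paper's ``$\ge n^2>1$'' should read $\ge n^2\ge 1$.
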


\begin{proof}
As we have already indicated, the spectrum of $K_{n,n,n}$ consists of the following eigenvalues $2n$, $0$ with the multiplicity $3n-3$, and $-n$ with the multiplicity $2$. 

If $r=1$, $H\cong (t+p_1)K_1$, and since its spectrum consists of entirely zeroes, we have $\sigma^{(2)}(K_{n,n,n},H)=6n^2>1$.

If $r=2$, $H\cong tK_1\cup K_{p_1,p_2}$, and since its spectrum consists of the following eigenvalues $\sqrt{p_1p_2}$, $-\sqrt{p_1p_2}$ and $0$ with the multiplicity $3n-2$, we easily compute $\sigma^{(2)}(K_{n,n,n},H)=(2n-\sqrt{p_1p_2})^2+(-n+\sqrt{p_1p_2})^2+(-n)^2\geq n^2>1$. 

If $r\geq 4$, $H\cong tK_1\cup K_{p_{1},\ldots,p_{r}}$. Let us assume $\sum_{i=1}^{r}p_i=p$ and $1\leq p_1\leq\cdots\leq p_r$. According to Proposition \ref{lambda2=0}, $H$ has exactly one positive eigenvalue, while its $r-1$ negative eigenvalues belong to certain intervals as it is specified by Theorem \ref{cmp_intelace}. Since $K_{n,n,n}$ has exactly three non-zero eigenvalues and all remaining equal to zero, while $H$ has at least $r$ non-zero eigenvalues, it holds $\sigma^{(2)}(K_{n,n,n},H)\geq\lambda_{3n-2}^2(H)=\lambda_{p-2}^2(K_{p_{1},\ldots,p_{r}})\geq p_{r-3}^2\geq 1$.

The proof is completed.  
\end{proof}

\begin{theorem}\label{new_asym}
Let $H=tK_{1}\cup K_{n+a_{1},n+a_{2},n+a_{3}}$ be a graph of order $3n$, where $a_i\in\mathbb{Z}$ are fixed integers and $n+a_i\geq 1$, for $1\leq i\leq 3$, and let $S=\sum_{i=1}^{3} a_{i}(=-t\leq 0)$ and $A=\sum_{i=1}^{3} a_{i}^{2}.$ Then
$$\sigma^{(2)}(K_{n,n,n},H)=\frac{5}{9}S^{2}+\frac{1}{3}A+O(n^{-1}), \quad n\to+\infty.$$
The minimum of the leading term $\frac{5}{9}S^{2}+\frac{1}{3}A$ over all integer triples $(a_{1},a_{2},a_{3})\not=(0,0,0)$ equals $\frac{2}{3},$ and it is attained only for $(a_1,a_2,a_3)=(-1,0,1)$ up to permutation, in which case $S=0$ and $A=2$.
\end{theorem}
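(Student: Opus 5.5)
We will use Lemma \ref{helpful} with $(a,b,c)=(n+a_1,n+a_2,n+a_3)$:
$$\sigma^{(2)}(K_{n,n,n},H)=6n^2+2\alpha-6n\lambda_1,$$
where $\lambda_1=\lambda_1(K_{n+a_1,n+a_2,n+a_3})$. With $e_1=S$, $e_2=a_1a_2+a_1a_3+a_2a_3=\frac{S^2-A}{2}$ and $e_3=a_1a_2a_3$, we have
$$\alpha=3n^2+2Sn+e_2,\qquad \beta=2(n^3+Sn^2+e_2n+e_3).$$
So everything reduces to expanding $\lambda_1$, the largest root of $\phi(x)=x^3-\alpha x-\beta$, to order $O(n^{-2})$. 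The factor $6n$ then leaves an $O(n^{-1})$ error.

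\textbf{Step 1 (expansion of $\lambda_1$).} Following the proof of Theorem \ref{asymptotic}, put $t=1/n$ and $z=t\lambda_1$. Dividing $\phi(\lambda_1)=0$ by $n^3$ gives
$$z^3-(3+2St+e_2t^2)z-2(1+St+e_2t^2+e_3t^3)=0.$$
At $t=0$ this is $z^3-3z-2=(z-2)(z+1)^2$. The root $z=2$ is simple, since the derivative there is $9\neq 0$. Hence the Implicit Function Theorem gives an analytic branch $z(t)=2+ut+vt^2+O(t^3)$.

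This branch is the largest root for large $n$. By Corollary \ref{cmp_largest_eigen}, $\lambda_1\ge 2n+\min_{i\neq j}(a_i+a_j)$. The other two roots are negative and lie near $-n$ by Theorem \ref{cmp_intelace}.

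Comparing the coefficient of $t$ gives $9u=4S+2S$, so $u=\frac{2S}{3}$. The coefficient of $t^2$ determines $v$ in terms of $S^2$ and $e_2$. Then
$$\lambda_1=2n+u+\frac{v}{n}+O(n^{-2}).$$

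\textbf{Step 2 (substitution).} Substituting into the formula from Lemma \ref{helpful}:
$$6n^2+6n^2+4Sn+2e_2-12n^2-6un-6v+O(n^{-1}).$$
The $n$-terms give $(4S-4S)n=0$, as required. The constant term is $2e_2-6v$. One computes $v$ explicitly (I expect $v=\frac{e_2}{3}-\frac{5S^2}{27}+\dots$, to be confirmed). Rewriting with $e_2=\frac{S^2-A}{2}$ should then yield $\frac{5}{9}S^2+\frac{1}{3}A$.

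Two checks support this. For $(-1,0,1)$ we get $A=2$ and the value $\frac{2}{3}$, matching Theorem \ref{asymptotic}(iv). For $a=(-1,0,0)$, i.e.\ $H=K_1\cup K_{n-1,n,n}$, we get $\frac{5}{9}+\frac{1}{3}=\frac{8}{9}$. The second-order coefficient comparison is the main obstacle: it is an error-prone bookkeeping step and must be done carefully.

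\textbf{Step 3 (minimization).} Let $(a_1,a_2,a_3)\neq 0$ be an integer triple. We compare $Q=\frac{5}{9}S^2+\frac{1}{3}A$ with $\frac{2}{3}$.

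If $S\neq 0$, then $S^2\ge 1$ and $A\ge 1$, so $Q\ge\frac{8}{9}>\frac{2}{3}$.

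If $S=0$, then $A\ge 2$, because a nonzero integer triple with zero sum has at least two nonzero entries. Equality $A=2$ forces exactly two entries $\pm1$ of opposite sign, i.e.\ a permutation of $(-1,0,1)$; this gives $Q=\frac{2}{3}$. Any other triple with $S=0$ has $A\ge 6$ (for example $(-2,1,1)$), giving $Q\ge 2$.

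This establishes both the asymptotic formula and the uniqueness of the minimizer.
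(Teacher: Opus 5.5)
Your plan is the same as the paper's: apply Lemma~\ref{helpful}, expand $\lambda_1(K_{n+a_1,n+a_2,n+a_3})=2n+u+v/n+O(n^{-2})$, note that the $n$-terms cancel, and read off the constant $2e_2-6v$. Your Implicit Function Theorem set-up in $z=\lambda_1/n$ is an equivalent, slightly more careful, version of the paper's direct substitution of $2n+\delta$ into $\phi$. Step~3 is correct.

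\textbf{The gap.} The one computation that actually produces $\frac59S^2+\frac13A$ is never carried out. You leave $v$ undetermined, and your tentative value $v=\frac{e_2}{3}-\frac{5S^2}{27}$ is wrong. It would give $2e_2-6v=\frac{10}{9}S^2$. That equals $0$ for $(-1,0,1)$, contradicting the very check you cite. Moreover, both of your checks only evaluate the target expression; they do not test your derivation. As written, the asymptotic formula is asserted, not proved.

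\textbf{The fix.} The missing step is short. With $z=2+ut+vt^2+O(t^3)$ you have
\[
z^3=8+12ut+(12v+6u^2)t^2+O(t^3),
\qquad
(3+2St+e_2t^2)z=6+(3u+4S)t+(3v+2Su+2e_2)t^2+O(t^3).
\]
So the $t^2$-coefficient of your equation gives
\[
9v+6u^2-2Su-4e_2=0,\qquad\text{i.e.}\qquad v=\frac{4e_2}{9}-\frac{4S^2}{27},
\]
using $u=\frac{2S}{3}$. Hence
\[
2e_2-6v=\frac{8}{9}S^2-\frac{2}{3}e_2=\frac59S^2+\frac13A,
\]
after substituting $e_2=\frac{S^2-A}{2}$. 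Equivalently, $\delta=\frac23S+\frac{\frac23S^2-2A}{9n}+O(n^{-2})$, which is exactly the expansion in the paper. With this inserted, your argument is complete.

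A minor point on Step~3: for the other zero-sum triples you only need $A>2$, which already follows from your equality analysis. Your claim $A\ge 6$ is true, since $A=2(a_1^2+a_1a_2+a_2^2)$ when $S=0$, but you support it only by an example.
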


\begin{proof}
According to Lemma \ref{helpful}, 
\begin{equation}\label{auxiliary}
\sigma^{(2)}(K_{n,n,n},H)=12n^2+4nS+S^2-A-6n\lambda_1(K_{n+a_{1},n+a_{2},n+a_{3}}).
\end{equation}
Without loss of generality, let us suppose that $a_1\leq a_2\leq a_3$. According to Corollary \ref{cmp_largest_eigen}, $2n+a_1+a_2\leq\lambda_1(K_{n+a_{1},n+a_{2},n+a_{3}})\leq 2n+a_2+a_3$. Therefore, $\lambda_1(K_{n+a_{1},n+a_{2},n+a_{3}})=2n+\delta$, where $\delta\in[a_1+a_2, a_2+a_3]$, that is $\delta=O(1)$. Having in mind the notation introduced in Section 3, it holds 
$$\alpha=3n^2+2\,(a_1+a_2+a_3)n+a_1a_2+a_1a_3+a_2a_3$$ 
and 
$$\beta=2n^3+2n^2(a_1+a_2+a_3)+2(a_1a_2+a_1a_3+a_2a_3)n+2a_1a_2a_3.$$ 
With $P=a_1a_2+a_1a_3+a_2a_3=\frac{S^2-A}{2}$ and $Q=a_1a_2a_3$, we have
$$\phi(x)=x^3-(3n^2+2nS+P)x-(2n^3+2n^2S+2nP+2Q),$$
that is
$$0=\phi(2n+\delta)=\delta^3+6n\delta^2+(9n^2-2nS-P)\delta-2Q-4nP-6n^2S.$$
Using the fact $\delta=O(1)$, after simple calculation, we obtain $\delta=\frac{2}{3}S+\frac{\frac{2}{3}S^{2}-2A}{9n}+O(n^{-2})$. So, (\ref{auxiliary}) becomes:
$$\sigma^{(2)}(K_{n,n,n},H)=4nS+(S^{2}-A)-6n\delta=\frac{5}{9}S^{2}+\frac{1}{3}A+O(n^{-1}), \quad n\to+\infty.$$  
If $S=0$ then $A\geq2$, with equality only for $(a_1,a_2,a_3)=(-1,0,1)$. If $S\neq0$ then $\frac59S^2+\frac13A\geq\frac59+\frac13>\frac23$.
The proof is completed.
\end{proof}

The error term given by Theorem \ref{new_asym} depends on $(a_1,a_2,a_3)$, so the theorem alone does not determine the minimizer for any fixed $n$. Having in mind Lemma \ref{r_equal_3} and the fact $\sigma^{(2)}(K_{n,n,n}, K_{n-1,n,n+1})<\frac{7}{10}$, for $n\geq 4$, we note that $\sigma^{(2)}(K_{n,n,n},tK_{1}\cup K_{a,b,c})>\frac{7}{10}$ for $4\leq n\leq 40$ and every triple $(a,b,c)$, where $1\leq a\leq b\leq c$ and $a+b+c\leq 3n$, di\-ffe\-rent from $(n,n,n)$ and from $(n-1,n,n+1)$.

\section{Concluding remarks and further work}

We have shown that for every $n\geq3$ the graph $\mathcal{H}_{n}$ is closer to $K_{n,n,n}$ than $K_{n-1,n,n+1}$ is, with respect to both the Manhattan and the Euclidean spectral distance, so Conjectures \ref{conjecture:triparts:1} and \ref{conjecture:triparts:2} are false for every $n\geq4$. Moreover, $\mathrm{cs}_{\mathcal{G}}(K_{n,n,n})\leq1+o(1)$ and $\mathrm{cs}_{\mathcal{G}}^{(2)}(K_{n,n,n})\leq\frac{1}{2}+o(1)$, so the conjectured values $\frac{2\sqrt{3}}{3}$ and $\frac{2}{3}$ are wrong by a constant factor. A natural question is what a correct statement would look like. For $n=3$, the graph $\mathcal{H}_{3}$ is the unique graph nearest to $K_{3,3,3}$, with $\sigma(K_{3,3,3},\mathcal{H}_3)\approx1.2188$ and $\sigma^{(2)}(K_{3,3,3},\mathcal{H}_3)\approx0.5748$. Hence $\mathcal{H}_{3}$ is the graph which \cite{az-arscontemp} identified as the nearest one. Let us notice that the graph $F$ drawn in \cite[Figure 1]{az-arscontemp} is obtained from $\mathcal{H}_{3}$ by deleting two non-adjacent edges of its subgraph $K_{3,3}$, and that its distances to $K_{3,3,3}$ are $\sigma(K_{3,3,3},F)\approx4.7732$ and $\sigma^{(2)}(K_{3,3,3},F)\approx3.8058$. So, two edges are apparently missing in that drawing. We propose the following replacement of the two conjectures.

\begin{conjecture}\label{con_new}
Let $\mathcal{G}$ be the set of all non-isomorphic graphs on $3n$ vertices. For every $n\geq3$ and for $p\in\{1,2\},$ it holds $\mathrm{cs}_{\mathcal{G}}^{(p)}(K_{n,n,n})=\sigma^{(p)}(K_{n,n,n},H)$ for some graph $H$ if and only if $H\cong \mathcal{H}_{n}.$ In particular, $\mathrm{cs}_{\mathcal{G}}(K_{n,n,n})\rightarrow1$ and $\mathrm{cs}_{\mathcal{G}}^{(2)}(K_{n,n,n})\rightarrow\frac{1}{2}$, when $n\rightarrow\infty.$
\end{conjecture}

In the case $n=3$, the above conjecture is true as we have already mentioned. For $n\geq 3$, since the spectrum of $K_{n,n,n}$ is known, we have 
\begin{equation*}
\sigma^{(2)}(K_{n,n,n},H)=(\lambda_1(H)-2n)^2+\sum\limits_{i=2}^{3n-2}\lambda_i^2(H)+(\lambda_{3n-1}(H)+n)^2+(\lambda_{3n}(H)+n)^2.
\end{equation*} 
Consequently, if $\sigma^{(2)}(K_{n,n,n},H)<\frac{2}{3}$, then $|\lambda_1(H)-2n|<\sqrt{\frac{2}{3}}$, $|\lambda_j(H)+n|<\sqrt{\frac{2}{3}}$, $j=3n-1,3n$, and $\sum_{i=2}^{3n-2}\lambda_i^2(H)<\frac{2}{3}$. Thus all but three eigenvalues of $H$ are small, although these estimates alone do not imply that they are zero. Therefore, one possible approach to Conjecture \ref{con_new} in the case of connected graphs is suggested by the following problem.

\begin{problem}\label{problem}
Let $H$ be a connected graph of order $3n$, $n\geq 3$. Show that $\sigma^{(2)}(K_{n,n,n},H)<\frac{2}{3}$ implies that the rank of the adjacency matrix $A(H)$ is at most five.
\end{problem}

Regarding the Manhattan spectral distance, let $H\not\cong K_{n,n,n}$ be a graph attaining $\mathrm{cs}_{\mathcal{G}}(K_{n,n,n})$, where $n\geq3$. Since the sum of the adjacency eigenvalues of a graph is equal to zero, $\lambda_2(H)\leq\frac12\sigma(K_{n,n,n},H)$. By Theorem \ref{hn_distances} and Remark \ref{remark},
\begin{equation*}
\lambda_2(H)\leq\frac12\sigma(K_{n,n,n},H)\leq\frac12\sigma(K_{n,n,n},\mathcal{H}_n)<\frac12+\frac{1}{3n}\leq\frac{11}{18}<\frac{\sqrt5-1}{2}.
\end{equation*}
Properties of graphs whose second largest eigenvalue is at most $(\sqrt5-1)/2$ are studied in \cite{CvSim}. This suggests a possible approach to the Manhattan case of Conjecture \ref{con_new}, but the distance and uniqueness claims still require proof. For the Euclidean spectral distance, the assumption $\sigma^{(2)}(K_{n,n,n},H)<2/3$ gives $|\lambda_2(H)|<\sqrt{2/3}$; this upper bound exceeds $(\sqrt5-1)/2$ and therefore does not by itself place $H$ in the class considered in \cite{CvSim}. If Problem \ref{problem} has an affirmative answer, the characterizations of connected graphs of ranks $4$ and $5$ in \cite{rank4,rank5} may help analyse the remaining connected competitors. Their distances and the uniqueness of the minimizer would still have to be established, and disconnected competitors would also require a separate argument.

\bigskip

{\sloppy\bigskip\noindent\textbf{Acknowledgments.}~~\v{S}krekovski R.
acknowledges the partial support by ARIS program P1-0383 and bilateral
Slovenian-Croatian project BI-HR/25-27-004, while Sedlar J. acknowledges the support by Project
KK.\allowbreak01.\allowbreak 1.\allowbreak1.\allowbreak02.\allowbreak0027
co-financed by the European Regional Development Fund, by Croatian Ministry of
Science, Education and Youth through the bilateral Croatian-Slovenian project
2025-26, and by the NextGeneration EU foundation via IP-UNIST-17 (GEORAZ). }

\bigskip

\noindent\textbf{Declaration of generative AI use.}~~During the preparation of this work the authors used Claude (Anthropic) to assist with editing and preparing the final version of the manuscript, including verification of calculations. The authors reviewed and edited all content and take full responsibility for the paper.

\end{document}